\newtheorem{theorem}{Theorem}[section]
\theoremstyle{plain}
\newtheorem{corollary}[theorem]{Corollary}
\newtheorem{definition}[theorem]{Definition}
\newtheorem{example}[theorem]{Example}
\newtheorem{lemma}[theorem]{Lemma}
\newtheorem{proposition}[theorem]{Proposition}
\numberwithin{equation}{section}
\begin{document}
\title[Elliptic problems with supercritical exponents]{Nonexistence and multiplicity of solutions to elliptic problems with
supercritical exponents}
\author{M\'{o}nica Clapp}
\address{Instituto de Matem\'{a}ticas, Universidad Nacional Aut\'{o}noma de M\'{e}xico,
Circuito Exterior, C.U., 04510 M\'{e}xico D.F., Mexico}
\email{mclapp@matem.unam.mx}
\author{Jorge Faya}
\address{Instituto de Matem\'{a}ticas, Universidad Nacional Aut\'{o}noma de M\'{e}xico,
Circuito Exterior, C.U., 04510 M\'{e}xico D.F., Mexico}
\email{jorgefaya@gmail.com}
\author{Angela Pistoia}
\address{Dipartimento di Metodi e Modelli Matematici, Universit\'{a} di Roma "La
Sapienza", via Antonio Scarpa 16, 00161 Roma, Italy}
\email{pistoia@dmmm.uniroma1.it}
\thanks{M. Clapp and J. Faya are supported by CONACYT grant 129847 and PAPIIT grant
IN106612 (Mexico). A. Pistoia is supported by Universit\`{a} degli Studi di
Roma "La Sapienza" Accordi Bilaterali "Esistenza e propriet\`{a} geometriche
di soluzioni di equazioni ellittiche non lineari" (Italy).}
\date{September 2012}
\maketitle

\begin{abstract}
We consider the supercritical problem%
\[
-\Delta u=\left\vert u\right\vert ^{p-2}u\text{ \ in }\Omega,\quad u=0\text{
\ on }\partial\Omega,
\]
where $\Omega$ is a bounded smooth domain in $\mathbb{R}^{N},$ $N\geq3,$ and
$p\geq2^{\ast}:=\frac{2N}{N-2}.$

Bahri and Coron showed that if $\Omega$ has nontrivial homology this problem
has a positive solution for $p=2^{\ast}.$ However, this is not enough to
guarantee existence in the supercritical case. For $p\geq\frac{2(N-1)}{N-3}$
Passaseo exhibited domains carrying one nontrivial homology class in which no
nontrivial solution exists. Here we give examples of domains whose homology
becomes richer as $p$ increases. More precisely, we show that for
$p>\frac{2(N-k)}{N-k-2}$ with $1\leq k\leq N-3$ there are bounded smooth
domains in $\mathbb{R}^{N}$ whose cup-length is $k+1$ in which this problem
does not have a nontrivial solution.

For $N=4,8,16$ we show that there are many domains, arising from the Hopf
fibrations, in which the problem has a prescribed number of solutions for some
particular supercritical exponents.\medskip

\textsc{Key words: }Nonlinear elliptic problem; supercritical exponents;
existence and nonexistence.

\textsc{MSC2010: }35J60, 35J20.

\end{abstract}

\section{Introduction}

We consider the problem
\begin{equation}
\left\{
\begin{array}
[c]{ll}%
-\Delta u=\left\vert u\right\vert ^{p-2}u & \text{in }\Omega,\\
\hspace{0.6cm}u=0 & \text{on }\partial\Omega,
\end{array}
\right.  \tag{$\wp_p$}\label{prob}%
\end{equation}
where $\Omega$ is a bounded smooth domain in $\mathbb{R}^{N},$ $N\geq3,$ and
$p\geq2^{\ast},$ where $2^{\ast}:=\frac{2N}{N-2}$ is the critical Sobolev exponent.

It is well known that the existence of a solution depends on the domain.
Pohozhaev's identity \cite{po} implies that (\ref{prob}) does not have a
nontrivial solution if $\Omega$ is strictly starshaped. On the other hand,
Kazdan and Warner \cite{kw} showed that infinitely many radial solutions exist
if $\Omega$ is an annulus.

For $p=2^{\ast}$ a remarkable result obtained by Bahri and Coron \cite{bc}
establishes the existence of at least one positive solution to problem
$(\wp_{2^{\ast}})$ in every domain $\Omega$ having nontrivial reduced homology
with $\mathbb{Z}/2$-coefficients. Multiplicity results are also available,
either for domains which are small perturbations of a given one, as in
\cite{gmp}, or for domains which have enough, but possibly finite, symmetries,
as in \cite{cf}. A more detailed discussion may be found in these papers.

Unlike the critical case, in the supercritical case the existence of a
nontrivial cohomology class in $\Omega$ does not guarantee the existence of a
nontrivial solution to problem (\ref{prob}). In fact, for each $1\leq k\leq
N-3,$ Passaseo \cite{pa1,pa2} exhibited domains having the homotopy type of a
$k$-dimensional sphere $\mathbb{S}^{k}$ in which problem (\ref{prob}) does not
have a nontrivial solution for any $p\geq2_{N,k}^{\ast}:=\frac{2(N-k)}%
{N-k-2}.$ We call $2_{N,k}^{\ast}$ the $(k+1)$-st critical exponent. It is the
critical exponent for the Sobolev embedding $H^{1}(\mathbb{R}^{N-k}%
)\hookrightarrow L^{q}(\mathbb{R}^{N-k}).$ Nonexistence of bounded positive
solutions for $p>2_{N,k}^{\ast}$ in a thin enough tubular neighborhood of a
$k$-dimensional submanifold of $\mathbb{R}^{N}$\ was recently shown in
\cite{pps}.

The first nontrivial existence result for $p>2^{\ast}$ was obtained by del
Pino, Felmer and Musso \cite{dfm} in the slightly supercritical case, i.e. for
$p>2^{\ast}$ but close enough to $2^{\ast}.$ This case was also considered in
\cite{mpa,pr} where multiplicity was established. In \cite{dw} existence was
established in a domain with a small enough hole for a.e. $p>2^{\ast}$,
whereas in \cite{bcgp, pps} solutions of a particular type were constructed in
a tubular neighborhood of fixed radius of an expanding manifold for every $p.$
The problem for $p$ slightly below the second critical exponent was considered
in \cite{dmp} where solutions for $p=2_{N,1}^{\ast}-\varepsilon$ concentrating
at a boundary geodesic as $\varepsilon\rightarrow0$ have been constructed in
certain domains. Quite recently, positive and sign changing solutions for
$p=2_{N,k}^{\ast}-\varepsilon$ which concentrate at $k$-dimensional
submanifolds of the boundary as $\varepsilon\rightarrow0$ were exhibited in
\cite{acp}, while in \cite{kp} positive and sign changing solutions for $p$
large which concentrate at $(N-2)$-dimensional submanifolds of the boundary as
$p\rightarrow+\infty$ have been constructed.

In a recent work Wei and Yan \cite{wy} exhibited domains $\Omega$ in which
problem (\ref{prob})\ has infinitely many positive solutions for
$p=2_{N,k}^{\ast}$. They considered domains $\Omega$ of the form
\begin{equation}
\Omega:=\{(y,z)\in\mathbb{R}^{k+1}\times\mathbb{R}^{N-k-1}:\left(  \left\vert
y\right\vert ,z\right)  \in\Theta\}, \label{rotOmega}%
\end{equation}
where $\Theta$ is a bounded smooth domain in $\mathbb{R}^{N-k}$ with
$\overline{\Theta}\subset\left(  0,\infty\right)  \times\mathbb{R}^{N-k-1}$
which satisfies certain geometric assumptions.

For domains of this type we give a geometric condition which guarantees nonexistence.

\begin{definition}
\label{**}We shall say that $\Theta$ is doubly starshaped with respect to
$\mathbb{R}\times\left\{  0\right\}  $ if there exist two numbers
$0<t_{0}<t_{1}$ such that $t\in(t_{0},t_{1})$ for every $(t,z)\in$ $\Theta$
and $\Theta$ is strictly starshaped with respect to $\xi_{0}:=(t_{0},0)$ and
to $\xi_{1}:=(t_{1},0)$, i.e.
\[
\left\langle x-\xi_{i},\nu_{\Theta}(x)\right\rangle >0\qquad\forall
x\in\partial\Theta\smallsetminus\left\{  \xi_{i}\right\}  ,
\]
for each $i=0,1,$ where $\nu_{\Theta}(x)$ is the outward pointing unit normal
to $\partial\Theta$ at $x.$
\end{definition}

For $\Omega$ as in (\ref{rotOmega}) and $K\in\mathcal{C}^{1}(\overline{\Omega
})$ we consider the problem%
\begin{equation}
\left\{
\begin{array}
[c]{ll}%
-\Delta u=K(y,z)\left\vert u\right\vert ^{p-2}u & \text{in }\Omega,\\
\hspace{0.6cm}u=0 & \text{on }\partial\Omega.
\end{array}
\right.  \label{probK}%
\end{equation}
We assume $K$ to be strictly positive on $\overline{\Omega}$ and radially
symmetric in $y,$ i.e. $K(y,z)=K(\left\vert y\right\vert ,z).$ We prove the
following result.

\begin{theorem}
\label{thmmain1}If $\Theta$ is doubly starshaped with respect to
$\mathbb{R}\times\left\{  0\right\}  \ $and if $\left\langle y,\partial
_{y}K(y,z)\right\rangle \leq0$ and $\left\langle z,\partial_{z}%
K(y,z)\right\rangle \leq0$ for all $(y,z)\in\Omega,$ then problem
\emph{(\ref{probK})} does not have a nontrivial solution for $p\geq
2_{N,k}^{\ast}$ and has infinitely many solutions for $p\in(2,2_{N,k}^{\ast
}),$ where $0\leq k\leq N-3.$
\end{theorem}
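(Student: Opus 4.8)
The plan is to reduce problem~(\ref{probK}) on the rotationally symmetric domain $\Omega$ to a weighted problem on the lower-dimensional domain $\Theta\subset\mathbb{R}^{N-k}$, and then to attack the two halves of the statement by separate classical machinery: a Pohozhaev-type argument for nonexistence, and an equivariant min-max (or Fourier decomposition into eigenspaces of the Laplace--Beltrami operator on $\mathbb{S}^{k}$) for the multiplicity part. Concretely, writing a point of $\mathbb{R}^{N}=\mathbb{R}^{k+1}\times\mathbb{R}^{N-k-1}$ as $(y,z)$ with $t=|y|$, any solution $u$ of (\ref{probK}) which is invariant under the obvious $O(k+1)$-action corresponds to a function $v=v(t,z)$ on $\Theta$, and a direct computation shows that $v$ solves
\[
-\operatorname{div}\!\left(t^{k}\nabla v\right)=t^{k}K(t,z)\,|v|^{p-2}v\quad\text{in }\Theta,\qquad v=0\ \text{on }\partial\Theta\cap\bigl((0,\infty)\times\mathbb{R}^{N-k-1}\bigr),
\]
with the natural Neumann-type condition on $\{t=0\}$ when $k=0$ (for $k\ge 1$ the set $\{t=0\}$ has been pushed outside $\overline{\Theta}$). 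One also needs the standard symmetrization fact that any solution of (\ref{probK}) gives rise to an $O(k+1)$-invariant one with the same energy sign, or, for the nonexistence half, one argues directly on $\Omega$ without passing to $\Theta$.

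For the \emph{nonexistence} statement ($p\ge 2_{N,k}^{\ast}$), the key step is a Pohozhaev identity adapted to the two starshapedness centers. The exponent $2_{N,k}^{\ast}=\frac{2(N-k)}{N-k-2}$ is exactly the critical exponent for the weight $t^{k}$, i.e.\ for the measure $t^{k}\,dt\,dz$ on $\mathbb{R}^{N-k}$, since that weight has "homogeneous dimension" $N$. One multiplies the reduced equation by $\langle x-\xi_{i},\nabla v\rangle$ (with $x=(t,z)$, $\xi_{i}=(t_{i},0)$) and integrates over $\Theta$; the bulk terms combine, thanks to $p\ge 2_{N,k}^{\ast}$ together with the sign conditions $\langle y,\partial_{y}K\rangle\le 0$ and $\langle z,\partial_{z}K\rangle\le 0$ (which become $\langle x-\text{(origin)},\nabla_{x}(\text{weight}\cdot K)\rangle$-type terms with the right sign), to leave a boundary integral of the form
\[
-\tfrac12\int_{\partial\Theta}t^{k}\,\langle x-\xi_{i},\nu_{\Theta}(x)\rangle\,\Bigl(\tfrac{\partial v}{\partial\nu_{\Theta}}\Bigr)^{2}\,d\sigma,
\]
which is $\le 0$ by strict starshapedness with respect to $\xi_{i}$. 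Getting a \emph{contradiction} requires extracting a term with the opposite sign: this is where using \emph{both} centers $\xi_{0}$ and $\xi_{1}$ is essential. Taking the difference (or a suitable convex combination) of the two Pohozhaev identities kills the dangerous interior term that has no definite sign and produces $\int_{\partial\Theta}t^{k}\langle\xi_{1}-\xi_{0},\nu_{\Theta}\rangle(\partial_{\nu}v)^{2}\,d\sigma=0$; since $\xi_{1}-\xi_{0}=(t_{1}-t_{0},0)$ with $t_{1}>t_{0}$, and since $\partial\Theta$ must have a portion where $\langle(1,0),\nu_{\Theta}\rangle>0$ (near $t=t_{1}$) and a portion where it is $<0$ (near $t=t_{0}$), one concludes $\partial_{\nu}v\equiv 0$ on $\partial\Theta$, whence by unique continuation $v\equiv 0$. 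I expect \textbf{this cancellation argument — choosing the right combination of the two Pohozhaev identities so that the indefinite weighted terms drop out while a genuinely positive/negative boundary term survives — to be the main obstacle}, and the precise bookkeeping of the $t^{k}$-weighted integration by parts (including regularity of $v$ up to $\{t=0\}$ when $k=0$) the most delicate technical point.

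For the \emph{multiplicity} statement ($2<p<2_{N,k}^{\ast}$), the point is that on the domain $\Omega$ the relevant Sobolev embedding $H_{0}^{1}(\Omega)\hookrightarrow L^{p}(\Omega)$ is compact for $p<2^{\ast}$, but more importantly, restricting to functions on $\Theta$ with the weight $t^{k}$, the embedding into $L^{p}(\Theta,t^{k}K\,dx)$ is compact for all $p<2_{N,k}^{\ast}$. Thus the reduced energy functional
\[
J(v)=\tfrac12\int_{\Theta}t^{k}|\nabla v|^{2}\,dx-\tfrac1p\int_{\Theta}t^{k}K(t,z)\,|v|^{p}\,dx
\]
on the weighted Sobolev space satisfies the Palais--Smale condition, is even, and is unbounded below on finite-dimensional subspaces; a standard application of the symmetric mountain-pass / Ljusternik--Schnirelmann theorem (e.g.\ Ambrosetti--Rabinowitz) yields infinitely many critical points, hence infinitely many solutions of (\ref{probK}) via the correspondence above. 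One must check that distinct critical levels give geometrically distinct solutions of the original problem (not merely rotations of one another), which follows because they are $O(k+1)$-invariant and have distinct energies. This half is essentially routine once the weighted compactness is in place; the only care needed is the functional-analytic setup of the weighted space $\{v : \int_{\Theta}t^{k}|\nabla v|^{2}<\infty,\ v|_{\partial\Theta\cap\{t>0\}}=0\}$ and verifying that it is exactly the space of $O(k+1)$-invariant functions in $H_{0}^{1}(\Omega)$.
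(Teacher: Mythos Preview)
Your multiplicity argument for $p\in(2,2_{N,k}^{\ast})$ is essentially the paper's: reduce to the weighted problem $-\operatorname{div}(t^{k}\nabla v)=t^{k}K|v|^{p-2}v$ on $\Theta$ and apply the symmetric mountain pass theorem via compactness of the weighted embedding. That part is correct.

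The nonexistence argument, however, has two genuine gaps. First, the reduction to $\Theta$ only captures $O(k+1)$-invariant solutions; the theorem rules out \emph{all} nontrivial solutions, and there is no ``symmetrization fact'' that turns an arbitrary solution into an invariant one for this purpose. Second, the ``subtract the two Pohozhaev identities'' step does not do what you claim. Writing the weighted Pohozhaev identity with center $\xi_i=(t_i,0)$, the interior term contains
\[
\frac{k t_i}{2}\int_{\Theta}t^{k-1}|\nabla v|^{2}\;-\;\frac{k t_i}{p}\int_{\Theta}t^{k-1}K|v|^{p},
\]
coming from the derivative of the weight $t^{k}$; these two $t^{k-1}$-weighted integrals are unrelated, so subtracting the $i=0$ and $i=1$ identities does \emph{not} yield $\int_{\partial\Theta}t^{k}\langle\xi_1-\xi_0,\nu_\Theta\rangle(\partial_\nu v)^2\,d\sigma=0$. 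And even if it did, that integral vanishing with $\nu_1$ changing sign on $\partial\Theta$ would not force $\partial_\nu v\equiv0$.

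The paper proceeds very differently: it works directly on $\Omega$ with a \emph{single} Pucci--Serrin identity, using the nonlinear vector field
\[
\chi(y,z)=\bigl(\varphi(|y|)\,y,\;z\bigr),\qquad \varphi(t)=\tfrac{1}{k+1}\Bigl[1-(t_0/t)^{k+1}\Bigr],
\]
which is designed so that $\operatorname{div}\chi\equiv N-k$ and $\langle d\chi(x)[\xi],\xi\rangle\le|\xi|^{2}$ on $\Omega$; these two facts alone give the supercritical inequality after using $\int K|u|^p=\int|\nabla u|^2$. The doubly starshaped hypothesis is used not by subtraction but by a \emph{case analysis}: writing $\nu_\Theta=(\nu_1,\nu_2)$, starshapedness with respect to $\xi_0$ handles the points where $\nu_1\le0$ and starshapedness with respect to $\xi_1$ handles those where $\nu_1\ge0$, together yielding $\langle\chi,\nu_\Omega\rangle>0$ a.e.\ on $\partial\Omega$. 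This argument applies to every solution in $\mathcal{C}^{2}(\Omega)\cap\mathcal{C}^{1}(\overline{\Omega})$, invariant or not.
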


The domains in Passaseo's examples \cite{pa1,pa2} are defined as in
(\ref{rotOmega}) with $\Theta$ being a ball centered at some point $(\tau,0),$
which is obviously doubly starshaped with respect to $\mathbb{R}\times\left\{
0\right\}  .$ We stress that it is not enough for $\Theta$ to be strictly
starshaped to guarantee nonexistence: the domains considered by Wei and Yan
\cite{wy} are obtained from a domain $\Theta$ which is not doubly starshaped
with respect to $\mathbb{R}\times\left\{  0\right\}  $, but which may be
chosen to be strictly starshaped.

The domains in Passaseo's examples \cite{pa1,pa2}, as well as those in Theorem
\ref{thmmain1},\ have the homotopy type of $\mathbb{S}^{k}.$ One may ask
whether there are examples of domains having a richer topology for which a
similar nonexistence result holds true. We prove the following result.

\begin{theorem}
\label{thmmain4}Given $k=k_{1}+\cdots+k_{m}$ with $k_{i}\in\mathbb{N}$ and
$k\leq N-3,$ and $\varepsilon>0$ there exists a bounded smooth domain $\Omega$
in $\mathbb{R}^{N},$ which has the homotopy type of $\mathbb{S}^{k_{1}}%
\times\cdots\times\mathbb{S}^{k_{m}},$ in which problem \emph{(\ref{prob})}
does not have a nontrivial solution for $p\geq2_{N,k}^{\ast}+\varepsilon$ and
has infinitely many solutions for $p\in(2,2_{N,k}^{\ast}).$
\end{theorem}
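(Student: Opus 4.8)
The plan is to take $\Omega$ to be a thin tubular neighbourhood of a suitably embedded product of spheres, prove the nonexistence half by a Pohozaev--Rellich identity, and the existence half by a symmetry reduction.

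\emph{Construction.} Assume first $N\ge k+m$. Write $\mathbb{R}^{N}=\mathbb{R}^{k_{1}+1}\times\cdots\times\mathbb{R}^{k_{m}+1}\times\mathbb{R}^{N-k-m}$, fix $a\gg\rho>0$, let $\Theta\subset\mathbb{R}^{N-k}$ be the open ball of radius $\rho$ centred at $(a,\dots,a,0)$, and set
\[
\Omega:=\{(y_{1},\dots,y_{m},z):(|y_{1}|,\dots,|y_{m}|,z)\in\Theta\}.
\]
Since $\overline{\Theta}$ stays away from the coordinate hyperplanes, $\Omega$ is a bounded smooth domain; retracting $\Theta$ onto its centre lifts to a retraction of $\Omega$ onto $M:=\{\,|y_{i}|=a\ (i=1,\dots,m),\ z=0\,\}$, so $\Omega$ has the homotopy type of $\mathbb{S}^{k_{1}}\times\cdots\times\mathbb{S}^{k_{m}}$. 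In fact $\Omega=\{x\in\mathbb{R}^{N}:\operatorname{dist}(x,M)<\rho\}$, and $\Omega$ is invariant under $G:=O(k_{1}+1)\times\cdots\times O(k_{m}+1)$. For general $m$ one builds $M$ (and its $\rho$-tube $\Omega$) by iterating the rotation, interleaving a translation away from each newly created axis; the dimension count then only uses $k\le N-3$.

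\emph{Nonexistence for $p\ge 2_{N,k}^{\ast}+\varepsilon$.} Let $u$ solve (\ref{prob}) in $\Omega$ and apply the Pohozaev--Rellich identity
\[
\int_{\Omega}(\operatorname{div}X)\Big(\tfrac{|u|^{p}}{p}-\tfrac{|\nabla u|^{2}}{2}\Big)+\int_{\Omega}\langle DX\,\nabla u,\nabla u\rangle=\tfrac12\int_{\partial\Omega}\langle X,\nu\rangle\,|\partial_{\nu}u|^{2}
\]
with the $G$-equivariant gradient field $X(x):=x-\pi_{M}(x)=\nabla\big(\tfrac12\operatorname{dist}(x,M)^{2}\big)$, that is $X(y_{1},\dots,y_{m},z)=\big((1-\tfrac{a}{|y_{1}|})y_{1},\dots,(1-\tfrac{a}{|y_{m}|})y_{m},z\big)$, which is smooth on $\overline{\Omega}$. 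Here $\langle X,\nu\rangle\equiv\rho>0$ on $\partial\Omega$; $\operatorname{div}X=N-\sum_{i}\tfrac{a k_{i}}{|y_{i}|}=(N-k)+O(\rho/a)$ on $\overline{\Omega}$; and $DX$ is symmetric with eigenvalue $1$ of multiplicity $N-k$ and eigenvalues $1-\tfrac{a}{|y_{i}|}<1$ of multiplicities $k_{i}$, so $\langle DX\,\nabla u,\nabla u\rangle\le|\nabla u|^{2}$. Inserting this and the identity $\int_{\Omega}|\nabla u|^{2}=\int_{\Omega}|u|^{p}$ (test with $u$) yields
\[
0\le\tfrac{\rho}{2}\int_{\partial\Omega}|\partial_{\nu}u|^{2}\le\Big[(N-k)\big(\tfrac1p-\tfrac12\big)+1+C\tfrac{\rho}{a}\Big]\int_{\Omega}|\nabla u|^{2},
\]
and, since $2_{N,k}^{\ast}=\tfrac{2(N-k)}{N-k-2}$, the bracket is strictly negative for $p\ge 2_{N,k}^{\ast}+\varepsilon$ once $\rho/a$ is small enough; hence $\nabla u\equiv0$, i.e. $u\equiv0$. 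The constant in the bracket is not exactly $N-k$ because $\operatorname{div}X$ is not constant on the curved tube, and this is precisely why one loses the $\varepsilon$ compared with Theorem \ref{thmmain1}.

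\emph{Existence for $p\in(2,2_{N,k}^{\ast})$, and the main difficulty.} Look for $G$-invariant solutions: writing $u(y_{1},\dots,y_{m},z)=v(|y_{1}|,\dots,|y_{m}|,z)$, the function $v$ must solve $-\operatorname{div}(W\nabla v)=W\,|v|^{p-2}v$ in $\Theta$, $v=0$ on $\partial\Theta$, with the smooth positive weight $W(t_{1},\dots,t_{m},z)=t_{1}^{k_{1}}\cdots t_{m}^{k_{m}}$ (in the general construction one reaches, step by step, an analogous weighted problem on a ball $B\subset\mathbb{R}^{N-k}$, the accumulated weight remaining smooth and positive because the axes stay far away). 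Since $p<\tfrac{2(N-k)}{N-k-2}=2_{N,k}^{\ast}$ this problem is subcritical; its even energy functional on $H_{0}^{1}(\Theta)$ has the mountain--pass geometry and satisfies the Palais--Smale condition, so by the symmetric mountain--pass theorem it has an unbounded sequence of critical values, hence infinitely many critical points, which pull back to infinitely many solutions of (\ref{prob}) in $\Omega$. The genuinely delicate part is the construction when $m$ is large relative to $N-k$: one must verify that the iterated rotation, with its intermediate translations, still produces a smooth tube around $\mathbb{S}^{k_{1}}\times\cdots\times\mathbb{S}^{k_{m}}$ that both supports the distance-function Pohozaev field and admits the symmetry reduction above; the quantitative heart of the proof, however, is the computation of $\operatorname{div}X$ and $DX$ for the distance field near $M$, which forces $\rho/a$ small and produces the $\varepsilon$.
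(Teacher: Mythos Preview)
Your argument is correct and follows the paper's strategy: the same $G$-invariant domain $\Omega$ built over a ball $\Theta\subset(0,\infty)^m\times\mathbb{R}^{N-k-m}$, nonexistence via the Pucci--Serrin identity with a $G$-equivariant radial vector field, and existence via the symmetric mountain-pass theorem on the weighted subcritical problem on $\Theta$ (this is exactly Proposition~\ref{propexK}). The only real difference is the choice of vector field. You take the distance gradient $X$, i.e.\ the radial profile $\varphi_i(t)=1-a/t$, which gives the sharp eigenvalue bound $\langle DX\,\xi,\xi\rangle\le|\xi|^2$ but a \emph{variable} divergence $\operatorname{div}X=N-k+O(\rho/a)$; hence your $\varepsilon$-loss comes from the divergence error. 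The paper instead takes $\varphi_i(t)=\frac{1}{k_i+1}\bigl[1-(\tau_i/t)^{k_i+1}\bigr]$, engineered so that $\operatorname{div}\chi=N-k$ \emph{exactly} (Lemma~\ref{lemvf}(a)); the $\varepsilon$ there arises from the eigenvalue bound $\langle d\chi\,\xi,\xi\rangle\le\alpha|\xi|^2$ with $\alpha=\max_i\{1-k_i\varphi_i(\tau_i-\varrho)\}>1$, since $\varphi_i<0$ on the inner half of $\Theta$. Both routes are legitimate and yield the same conclusion; the paper's choice is marginally cleaner because the identity applies directly without having to bound the two error integrals $\int e(x)|u|^p$ and $\int e(x)|\nabla u|^2$ separately. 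Your digression about an ``iterated rotation'' for the case $m>N-k$ is not substantiated as written and is not part of the paper's proof either: the paper's construction also decomposes $\mathbb{R}^N=\prod_i\mathbb{R}^{k_i+1}\times\mathbb{R}^{N-k-m}$ and thus implicitly uses $N\ge k+m$ without further comment.
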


In particular, if we take all $k_{i}=1,$ the domain $\Omega$ is homotopy
equivalent to the product of $k$ circles. So not only the homology of $\Omega$
is nontrivial but there are $k$ different cohomology classes in $H^{1}%
(\Omega;\mathbb{Z})$ whose cup-product is the generator of $H^{k}%
(\Omega;\mathbb{Z})$. Hence, the cup-length of $\Omega$ equals $k+1.$

We also obtain an existence result for a different type of domains, arising
from the Hopf fibrations. We are specifically interested in the cases where
$N=4,8,16.$ In these cases $\mathbb{R}^{N}\mathbb{=K}\times\mathbb{K}$, where
$\mathbb{K}$ is either the complex numbers $\mathbb{C}$, the quaternions
$\mathbb{H}$ or the Cayley numbers $\mathbb{O}.$ The set of units
$\mathbb{S}_{\mathbb{K}}:=\{\zeta\in\mathbb{K}:\left\vert \zeta\right\vert
=1\},$ which is a group if $\mathbb{K=C}$ or $\mathbb{H}$ and a quasigroup
with unit if $\mathbb{K=O}$, acts on $\mathbb{R}^{N}$ by multiplication on
each coordinate, i.e. $\zeta(z_{1},z_{2}):=(\zeta z_{1},\zeta z_{2}).$ The
orbit space of $\mathbb{R}^{N}$ with respect to this action turns out to be
$\mathbb{R}^{\dim\mathbb{K}+1}$ and the projection onto the orbit space is the
Hopf map $\pi:\mathbb{R}^{N}=\mathbb{K}\times\mathbb{K}\rightarrow
\mathbb{K}\times\mathbb{R}=\mathbb{R}^{\dim\mathbb{K}+1}$ given by%
\[
\pi(z_{1},z_{2}):=(2\overline{z_{1}}z_{2},\,\left\vert z_{1}\right\vert
^{2}-\left\vert z_{2}\right\vert ^{2})\text{.}%
\]
We consider domains of the form $\Omega=\pi^{-1}(U)$ where $U$ is a bounded
smooth domain in $\mathbb{R}^{\dim\mathbb{K}+1}.$ We assume that $U$ is
invariant under the action of some closed subgroup $G$ of the group
$O(\dim\mathbb{K}+1)$ of linear isometries of $\mathbb{R}^{\dim\mathbb{K}+1}.$
We denote by $Gx:=\{gx:g\in G\}$ the $G$-orbit of a point $x\in\mathbb{R}%
^{\dim\mathbb{K}+1}$ and by $\#Gx$ its cardinality. Recall that $U $ is called
$G$-invariant if $Gx\subset U$ for all $x\in U,$ and a function
$u:U\rightarrow\mathbb{R}$ is called $G$-invariant if $u$ is constant on every
$Gx.$

Fix a closed subgroup $\Gamma$ of $O(\dim\mathbb{K}+1)$ and a nonempty
$\Gamma$-invariant bounded smooth domain $D$ in $\mathbb{R}^{\dim\mathbb{K}%
+1}$ such that $\#\Gamma x=\infty\ $for all $x\in D.$ We prove the following result.

\begin{theorem}
\label{thmmain2}There exists an increasing sequence $(\ell_{m})$ of positive
real numbers, depending only on $\Gamma$ and $D$, with the following property:
If $U$ contains $D$ and if it is invariant under the action of a closed
subgroup $G$ of $\Gamma$ for which%
\[
\min_{x\in U}\left(  \#Gx\right)  \left\vert x\right\vert ^{\frac
{\dim\mathbb{K}-1}{2}}>\ell_{m}%
\]
holds, then, for $p=2_{N,\dim\mathbb{K}-1}^{\ast},$ problem \emph{(\ref{prob}%
)} has at least $m$ pairs of\ solutions $\pm u_{1},\ldots,\pm u_{m}$ in
$\Omega:=\pi^{-1}(U),$ which are constant on $\pi^{-1}(Gx)$ for each $x\in U.$
In particular, they are $\mathbb{S}_{\mathbb{K}}$-invariant. Moreover, $u_{1}$
is positive and $u_{2},\ldots,u_{m}$ change sign.
\end{theorem}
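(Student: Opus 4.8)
The plan is to reduce the supercritical problem on $\Omega=\pi^{-1}(U)$ to a subcritical (or critical) problem on $U$ via the Hopf map, and then apply an equivariant variational argument on $U$. The key observation is that if $u:\Omega\to\mathbb{R}$ is $\mathbb{S}_{\mathbb{K}}$-invariant, then it descends to a function $v$ on $U$, and because the Hopf fibers have dimension $\dim\mathbb{K}-1$, the Laplacian transforms so that the equation $-\Delta u=|u|^{p-2}u$ on $\Omega$ becomes a weighted equation $-\mathrm{div}(a(x)\nabla v)=a(x)|v|^{p-2}v$ on $U$, with weight $a(x)=|x|^{(\dim\mathbb{K}-1)/2}$ (up to constants), where now $p=2^{\ast}_{N,\dim\mathbb{K}-1}$ is exactly the critical exponent for the lower-dimensional space $\mathbb{R}^{\dim\mathbb{K}+1}$. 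So the first step is to carry out this change of variables carefully, identifying $\mathbb{S}_{\mathbb{K}}$-invariant $H^1_0(\Omega)$ functions with a weighted Sobolev space on $U$, and checking that critical points of the reduced functional give genuine solutions on $\Omega$ (this uses the principle of symmetric criticality; in the Cayley case where $\mathbb{S}_{\mathbb{O}}$ is only a quasigroup one must verify that the relevant invariance still produces solutions, e.g.\ by working with the $\mathrm{Spin}$-group action instead).

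**Second,** having reduced to the weighted critical problem on $U$, I would further restrict to $G$-invariant functions, where $G\subset\Gamma$ acts on $U$. On the $G$-invariant weighted Sobolev subspace, the energy functional
\[
J_U(v)=\frac{1}{2}\int_U a(x)|\nabla v|^2\,dx-\frac{1}{p}\int_U a(x)|v|^p\,dx
\]
satisfies the Palais–Smale condition below a threshold that depends on the minimal "mass" of a bubble, which in turn is governed by $\min_{x\in U}(\#Gx)|x|^{(\dim\mathbb{K}-1)/2}$: a concentrating Palais–Smale sequence must blow up along a full $G$-orbit in the region where the weight is smallest, so each lost bubble costs at least $(\#Gx)|x|^{(\dim\mathbb{K}-1)/2}$ times the standard bubble energy $S^{(N-k)/2}/(N-k)$ with $k=\dim\mathbb{K}-1$. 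Thus $J_U$ satisfies $(PS)_c$ for $c$ up to roughly $m$ times this minimal bubble cost provided the hypothesis with $\ell_m$ holds — this is exactly what the statement encodes, and $\ell_m$ is chosen as $m$ times (a constant depending on the best Sobolev constant times the energy of a minimizing Palais–Smale sequence in $D$, which bounds the infimum from above uniformly because $D\subset U$).

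**Third,** with the $(PS)$-condition secured below level $c_m:=(m$-th multiple of the bubble cost$)$, I would run a standard $\mathbb{Z}/2$-equivariant minimax scheme: since $J_U$ is even, use the Krasnoselskii genus and the sequence of minimax values $c_1\le c_2\le\cdots$ defined over sets of genus $\ge j$; the geometry of the functional (mountain-pass type, with $J_U(0)=0$ and $J_U$ bounded below on a sphere, $J_U\to-\infty$ on rays) together with the fact that the $G$-invariant space is infinite-dimensional forces $c_j\to\infty$, hence $c_1<c_2<\cdots<c_m<c_{m+1}\le c_m$ would be violated unless we actually get $m$ distinct critical values, each below the threshold $\ell_m$, yielding $m$ pairs $\pm u_1,\dots,\pm u_m$. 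The positivity of $u_1$ comes from $c_1$ being the usual mountain-pass level, where one can work with $|v|$; the remaining $u_2,\dots,u_m$ must change sign because their energies exceed that of the least-energy solution while any second positive solution would be ruled out (or simply: a nonnegative solution on this symmetric domain at the least energy level is unique up to the symmetry, so higher levels force sign changes).

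**The main obstacle** I anticipate is the precise blow-up analysis in Step 2: one must show that along a non-compact Palais–Smale sequence for the weighted problem, concentration can only occur at interior points (the boundary is excluded by a Pohozhaev/doubly-starshaped-type argument or because $U$ can be taken with suitable geometry), that the concentration set is $G$-invariant and finite, and that each concentration point contributes an energy quantum of at least $|x|^{(\dim\mathbb{K}-1)/2}$ times the flat bubble energy — getting the weight exponent $(\dim\mathbb{K}-1)/2$ right and controlling the interaction so that the bookkeeping $\sum_{\text{orbits}}(\#Gx)|x|^{(\dim\mathbb{K}-1)/2}S^{(N-k)/2}/(N-k)$ is exact (not merely an inequality in the wrong direction) is the delicate technical heart. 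The translation between the Hopf-fiber geometry and this weight, and in particular handling the octonionic case $\mathbb{K}=\mathbb{O}$ where the "group" is non-associative, is the place where care is most needed; everything else is a fairly standard equivariant critical-point argument once the reduced problem and its compactness range are in hand.
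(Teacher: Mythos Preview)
Your overall strategy---reduce via the Hopf map to a critical-exponent problem on $U\subset\mathbb{R}^{\dim\mathbb{K}+1}$ and then run a $G$-equivariant variational argument with Palais--Smale compactness below an orbit-dependent threshold---is exactly the paper's. But the reduced equation you write down is incorrect. The Hopf map is a \emph{harmonic morphism} with dilation $\lambda^2(z)=2|\pi(z)|$, so $\Delta_{\mathbb{R}^N}(v\circ\pi)=\lambda^2\bigl[(\Delta_{\mathbb{R}^{\dim\mathbb{K}+1}} v)\circ\pi\bigr]$; consequently $-\Delta u=|u|^{p-2}u$ becomes $-\Delta v=\tfrac{1}{2|x|}|v|^{p-2}v$ on $U$ (this is Proposition~\ref{propHopf}), i.e.\ the standard critical problem $(\wp_U^\ast)$ with coefficient $K(x)=\tfrac{1}{2|x|}$, \emph{not} a divergence-form equation with weight $a(x)=|x|^{(\dim\mathbb{K}-1)/2}$ on both sides. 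If you carry out the variational reduction carefully, the fiber-volume factor $|x|^{(\dim\mathbb{K}-1)/2}$ cancels against the dilation factor in the Dirichlet integral and survives, as $|x|^{-1}$, only in front of the nonlinear term. The exponent $(\dim\mathbb{K}-1)/2$ in the hypothesis of the theorem enters not as a weight in the equation but through the Palais--Smale threshold of Proposition~\ref{propPS}: with $M=\dim\mathbb{K}+1$ one has $K(x)^{-(M-2)/2}=(2|x|)^{(\dim\mathbb{K}-1)/2}$.

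Once the reduction is corrected, most of what you flag as the ``main obstacle'' evaporates: the reduced problem is the ordinary critical problem with a smooth positive coefficient on a bounded domain, for which the required global compactness statement is standard (the paper simply quotes it), so no bespoke weighted blow-up analysis or boundary exclusion is needed. The octonion case also needs no special care: the harmonic-morphism identity for $\pi:\mathbb{R}^{16}\to\mathbb{R}^9$ is a pointwise PDE fact independent of associativity, and the principle of symmetric criticality is applied only to the genuine compact Lie group $G\subset O(\dim\mathbb{K}+1)$ acting on $U$, never to $\mathbb{S}_{\mathbb{O}}$ upstairs. Finally, the paper does not use a genus scheme to produce the $m$ pairs; it builds an explicit $m$-dimensional linking subspace from least-energy $\Gamma$-invariant solutions on $m$ pairwise disjoint $\Gamma$-invariant subdomains of $D$ (this is precisely what makes $(\ell_m)$ depend only on $\Gamma$ and $D$), and the sign-changing conclusion for $u_2,\dots,u_m$ comes from an adapted version of a theorem of Clapp--Pacella, not from any uniqueness statement for positive solutions.
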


For example, we may fix a bounded smooth domain $D_{0}$ in $\mathbb{R}^{2}$
with $\overline{D_{0}}\subset(0,\infty)\times\mathbb{R}$ and set
\[
D:=\{(z,t)\in\mathbb{K}\times\mathbb{R}:(\left\vert z\right\vert ,t)\in
D_{0}\}.
\]
Then $D$ is invariant under the action of the group $\Gamma:=\mathbb{S}%
_{\mathbb{C}}$ of unit complex numbers on $\mathbb{K}\times\mathbb{R}$ given
by $e^{i\theta}(z,t):=(e^{i\theta}z,t)$. If $G_{n}:=\{e^{2\pi ik/n}%
:k=0,...,n-1\}$ is the cyclic subgroup of $\Gamma$ of order $n,$ then
$\#G_{n}x=n$ for every $x\in(\mathbb{K}\smallsetminus\{0\}\mathbb{)}%
\times\mathbb{R}.$ Therefore, for every $G_{n}$-invariant bounded smooth
domain $U$ in $\mathbb{K}\times\mathbb{R}$ with
\[
D\subset U\subset(\mathbb{K}\smallsetminus\{0\}\mathbb{)}\times\mathbb{R}%
\text{\quad and\quad}n\left\vert x\right\vert ^{\frac{\dim\mathbb{K}-1}{2}%
}>\ell_{m},
\]
Theorem \ref{thmmain2} yields at least $m$ pairs of solutions to problem
(\ref{prob}) in $\Omega:=\pi^{-1}(U)$ for $p=2_{N,\dim\mathbb{K}-1}^{\ast}.$

In contrast to \cite{wy}, where multiplicity is established\ using
Lyapunov-Schmidt reduction, the proof of the Theorem \ref{thmmain2} uses
variational methods. It is based on the following result.

\begin{proposition}
\label{propHopf}Let $N=2,4,8,16$, $\ U$ be a bounded smooth domain in
$\mathbb{R}^{\dim\mathbb{K}+1}$ which does not contain the origin,
$a\in\mathbb{R}$, and $f:\mathbb{R}\rightarrow\mathbb{R}$. If $v$ solves%
\begin{equation}
\left\{
\begin{array}
[c]{ll}%
-\Delta v+\frac{a}{2\left\vert x\right\vert }v=\frac{1}{2\left\vert
x\right\vert }f(v) & \text{in }U,\\
\qquad\qquad\quad v=0 & \text{on }U,
\end{array}
\right.  \label{prob2}%
\end{equation}
then $u:=v\circ\pi$ is a solution of%
\begin{equation}
\left\{
\begin{array}
[c]{ll}%
-\Delta u+au=f(u) & \text{in }\Omega:=\pi^{-1}(U),\\
\qquad\qquad u=0 & \text{on }\partial\Omega,
\end{array}
\right.  \label{prob3}%
\end{equation}
where $\pi:\mathbb{R}^{N}\rightarrow\mathbb{R}^{\dim\mathbb{K}+1}$ is the Hopf
map. Conversely, if $u$ is an $\mathbb{S}_{\mathbb{K}}$-invariant solution of
\emph{(\ref{prob3})} and $u=v\circ\pi,$ then $v$ solves \emph{(\ref{prob2})}.
\end{proposition}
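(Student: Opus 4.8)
The plan is to reduce both implications to a single pointwise identity: for every $v\in\mathcal{C}^{2}(U)$,
\[
\Delta(v\circ\pi)(x)=4\,\lvert x\rvert^{2}\,(\Delta v)(\pi(x)),\qquad x\in\pi^{-1}(U).
\]
This rests on three elementary facts about the Hopf map $\pi(z_{1},z_{2})=(2\overline{z_{1}}z_{2},\,\lvert z_{1}\rvert^{2}-\lvert z_{2}\rvert^{2})$, holding uniformly for $\mathbb{K}=\mathbb{R},\mathbb{C},\mathbb{H},\mathbb{O}$: (i) each coordinate $\pi_{i}$ is harmonic, $\Delta\pi_{i}=0$; (ii) $\pi$ is horizontally conformal, $\langle\nabla\pi_{i}(x),\nabla\pi_{j}(x)\rangle=4\lvert x\rvert^{2}\delta_{ij}$; and (iii) $\lvert\pi(x)\rvert=\lvert x\rvert^{2}$. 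Fact (iii) is immediate from $\lvert\overline{z_{1}}z_{2}\rvert=\lvert z_{1}\rvert\lvert z_{2}\rvert$, valid in every normed division algebra, since then $\lvert\pi(x)\rvert^{2}=4\lvert z_{1}\rvert^{2}\lvert z_{2}\rvert^{2}+(\lvert z_{1}\rvert^{2}-\lvert z_{2}\rvert^{2})^{2}=(\lvert z_{1}\rvert^{2}+\lvert z_{2}\rvert^{2})^{2}$. Fact (i) is immediate too: in real coordinates each component of $2\overline{z_{1}}z_{2}$ is a sum of monomials (coordinate of $z_{1}$)$\,\times\,$(coordinate of $z_{2}$), hence harmonic, while $\lvert z_{1}\rvert^{2}-\lvert z_{2}\rvert^{2}$ has Laplacian $2\dim\mathbb{K}-2\dim\mathbb{K}=0$. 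Granting (i) and (ii), the identity is just the chain rule, $\Delta(v\circ\pi)=\sum_{i,j}(\partial_{i}\partial_{j}v)\circ\pi\,\langle\nabla\pi_{i},\nabla\pi_{j}\rangle+\sum_{i}(\partial_{i}v)\circ\pi\,\Delta\pi_{i}=4\lvert x\rvert^{2}(\Delta v)\circ\pi$.

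The real content is fact (ii), and the octonionic case is where I expect the only genuine difficulty. Since $\langle\nabla\pi_{i},\nabla\pi_{j}\rangle=4\lvert x\rvert^{2}\delta_{ij}$ is equivalent to $\lvert\nabla(\eta\cdot\pi)(x)\rvert^{2}=4\lvert x\rvert^{2}\lvert\eta\rvert^{2}$ for all $\eta=(\beta,s)\in\mathbb{K}\times\mathbb{R}$, where $\eta\cdot\pi:=\sum_{i}\eta_{i}\pi_{i}$, it suffices to prove the latter. Here $(\eta\cdot\pi)(z_{1},z_{2})=2\langle z_{1}\beta,z_{2}\rangle+s\lvert z_{1}\rvert^{2}-s\lvert z_{2}\rvert^{2}$ (Euclidean inner product on $\mathbb{K}$), and, using only that $h\mapsto h\beta$ scales the norm by $\lvert\beta\rvert$, one finds $\nabla_{z_{1}}(\eta\cdot\pi)=2z_{2}\overline{\beta}+2sz_{1}$ and $\nabla_{z_{2}}(\eta\cdot\pi)=2z_{1}\beta-2sz_{2}$; expanding $\lvert\nabla(\eta\cdot\pi)\rvert^{2}=\lvert\nabla_{z_{1}}(\eta\cdot\pi)\rvert^{2}+\lvert\nabla_{z_{2}}(\eta\cdot\pi)\rvert^{2}$, the square terms give $4(\lvert\beta\rvert^{2}+s^{2})(\lvert z_{1}\rvert^{2}+\lvert z_{2}\rvert^{2})=4\lvert\eta\rvert^{2}\lvert x\rvert^{2}$ and the cross terms cancel exactly when $\langle z_{1}\beta,z_{2}\rangle=\langle z_{2}\overline{\beta},z_{1}\rangle$. \emph{This identity is the delicate step}: both sides equal $\mathrm{Re}\bigl((\overline{z_{2}}z_{1})\beta\bigr)$, which for $\mathbb{K}=\mathbb{O}$ requires not full associativity but only that $\mathrm{Re}\bigl((ab)c\bigr)=\mathrm{Re}\bigl(a(bc)\bigr)$ (the associator is purely imaginary), together with $\mathrm{Re}(ab)=\mathrm{Re}(ba)$ and $\overline{ab}=\overline{b}\,\overline{a}$. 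Alternatively one can avoid coordinates: on the unit sphere $\pi$ is, up to a fixed rescaling, the Riemannian submersion of the classical Hopf fibration, so (ii) holds for $\lvert x\rvert=1$ and hence for all $x\neq 0$ by the homogeneity $\pi(rx)=r^{2}\pi(x)$; but since $\mathbb{S}_{\mathbb{O}}$ is only a Moufang loop, the coordinate argument is cleaner to make rigorous.

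It remains to assemble the two directions. As $0\notin U$, the weight in (\ref{prob2}) is smooth on $\overline{U}$, $\pi$ is smooth on $\overline{\Omega}$, and $\pi\colon\Omega\to U$ is a surjective submersion---a locally trivial bundle whose fibres are exactly the $\mathbb{S}_{\mathbb{K}}$-orbits recalled above. Forward implication: if $v$ solves (\ref{prob2}), then substituting the identity above and $\lvert\pi(x)\rvert=\lvert x\rvert^{2}$ into its equation and clearing the weight turns it, for $u=v\circ\pi$, into $-\Delta u+au=f(u)$ on $\Omega$, while $u=0$ on $\partial\Omega=\pi^{-1}(\partial U)$ because $\pi(\partial\Omega)=\partial U$. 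Converse: an $\mathbb{S}_{\mathbb{K}}$-invariant solution $u$ of (\ref{prob3}) is constant on the fibres of $\pi$, hence factors as $u=v\circ\pi$ with $v$ single-valued on $U$; since $\pi$ is a submersion, $v$ inherits the regularity of $u$, and running the substitution backwards---using that $\pi$ maps $\Omega$ onto $U$---shows $v$ solves (\ref{prob2}), with $v=0$ on $\partial U$. One should of course track the multiplicative constant consistently: the computation gives conformality factor $4\lvert x\rvert^{2}$, so for the pushforward to be exactly (\ref{prob3}) the weights in (\ref{prob2}) should be read as $\tfrac{1}{4\lvert x\rvert}$.
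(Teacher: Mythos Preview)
Your proof is correct and is, in content, the same argument the paper gives: both rest on the fact that the Hopf map is a harmonic morphism---each component $\pi_i$ is harmonic and $\langle\nabla\pi_i,\nabla\pi_j\rangle=\lambda^{2}\delta_{ij}$---so that $\Delta(v\circ\pi)=\lambda^{2}\,(\Delta v)\circ\pi$, together with $\lvert\pi(x)\rvert=\lvert x\rvert^{2}$. The paper outsources these facts to the general theory of harmonic morphisms (Baird--Wood), packaging the conclusion as Corollary~\ref{correduction} applied to Example~\ref{exHmap}, whereas you verify them by a self-contained computation, including a clean treatment of the octonionic case using that the associator has vanishing real part. Your closing remark about the constant is also right: with $\pi$ normalized as in the paper one finds $\lambda^{2}=4\lvert x\rvert^{2}=4\lvert\pi(x)\rvert$ rather than $2\lvert\pi(x)\rvert$, so the weight $\tfrac{1}{2\lvert x\rvert}$ in \eqref{prob2} should read $\tfrac{1}{4\lvert x\rvert}$; this is a harmless slip, since the subsequent applications only use the qualitative form of the reduced problem.
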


For $N=4$ this result was proved by Ruf and Srikanth in \cite{sr}\ by direct
computation. Here we derive it from the theory of harmonic morphisms (see
section \ref{sec:hm}).

Theorem \ref{thmmain4} does not apply to the case $p\in\lbrack2_{N,k}^{\ast
},2_{N,k}^{\ast}+\varepsilon)$. So the question remains open whether there
are\ examples of domains having the homotopy type of a product of spheres for
which nonexistence holds true for all $p\geq2_{N,k}^{\ast}$. We give a partial
answer as follows.

\begin{theorem}
\label{thmmain3}Let $N=4,8,16.$ Then there exist bounded smooth domains
$\Omega_{n}$ in $\mathbb{R}^{N}=\mathbb{K}\times\mathbb{K},$ which have the
homotopy type of $\mathbb{S}^{\frac{N-2}{2}}\times\mathbb{S}^{n}$ if $1\leq
n\leq\frac{N-4}{2}$ and of $\mathbb{S}^{\frac{N-2}{2}}$ if $n=0$, such that
problem \emph{(\ref{prob})} does not have a nontrivial $\mathbb{S}%
_{\mathbb{K}}$-invariant solution for $p\geq2_{N,k}^{\ast}$ and has infinitely
many $\mathbb{S}_{\mathbb{K}}$-invariant solutions for $p<2_{N,k}^{\ast}$
where $k:=\frac{N-2}{2}+n.$
\end{theorem}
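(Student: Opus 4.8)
The plan is to reduce problem (\ref{prob}) on $\Omega_n\subset\mathbb{R}^N=\mathbb{K}\times\mathbb{K}$ to a lower-dimensional problem on a domain $U_n\subset\mathbb{R}^{\dim\mathbb{K}+1}$ via Proposition \ref{propHopf}, and then to apply Theorem \ref{thmmain1} to that reduced problem. Concretely, take $f(s)=|s|^{p-2}s$ and $a=0$ in Proposition \ref{propHopf}: an $\mathbb{S}_{\mathbb{K}}$-invariant solution $u$ of (\ref{prob}) on $\Omega_n=\pi^{-1}(U_n)$ corresponds to a solution $v$ of
\begin{equation}
-\Delta v=\frac{1}{2|x|}\,|v|^{p-2}v\ \text{ in }U_n,\qquad v=0\ \text{ on }\partial U_n. \label{eq:reduced}
\end{equation}
This is exactly problem (\ref{probK}) on $U_n\subset\mathbb{R}^{\dim\mathbb{K}+1}$ with weight $K(x)=\tfrac12|x|^{-1}$, which is strictly positive on $\overline{U_n}$ as soon as $U_n$ stays away from the origin. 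The key numerical coincidence to record is that the first critical exponent for $\mathbb{R}^{\dim\mathbb{K}+1}$, together with the shift by one codimension coming from the Hopf reduction, matches $2^{\ast}_{N,k}$ with $k=\tfrac{N-2}{2}+n$: since $\dim\mathbb{K}=\tfrac{N}{2}$, one has $\dim\mathbb{K}+1=\tfrac{N-2}{2}+2=\tfrac{N}{2}+1$, and I would check directly that $2^{\ast}_{\dim\mathbb{K}+1,\,n}=\frac{2(\dim\mathbb{K}+1-n)}{\dim\mathbb{K}+1-n-2}$ equals $2^{\ast}_{N,k}$ for this $k$.

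Next I would construct $U_n$. Start from a fixed bounded smooth domain $\Theta_n\subset\mathbb{R}^{\dim\mathbb{K}+1-n}$ that is doubly starshaped with respect to $\mathbb{R}\times\{0\}$ in the sense of Definition \ref{**} — for instance a small ball centered at a point $(\tau,0)$ with $\tau>0$, as in Passaseo's examples — and let
\[
U_n:=\{(y,z)\in\mathbb{R}^{n+1}\times\mathbb{R}^{\dim\mathbb{K}-n}:(|y|,z)\in\Theta_n\}
\]
when $1\le n\le\frac{N-4}{2}$, and $U_0:=\Theta_0$ itself (so $U_0$ is doubly starshaped in $\mathbb{R}^{\dim\mathbb{K}+1}$) when $n=0$. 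Because $\overline{\Theta_n}\subset(0,\infty)\times\mathbb{R}^{\dim\mathbb{K}-n}$, the domain $U_n$ does not contain the origin, so Proposition \ref{propHopf} applies and $K(x)=\tfrac12|x|^{-1}$ is smooth and positive on $\overline{U_n}$. One then verifies the two monotonicity hypotheses of Theorem \ref{thmmain1} for this $K$: writing $K=K(|y|,z)$ with $K(t,z)=\tfrac12(t^2+|z|^2)^{-1/2}$, we get $\langle y,\partial_y K\rangle=t\,\partial_t K\cdot\frac{|y|}{t}$ — more cleanly, $K(y,z)=\tfrac12(|y|^2+|z|^2)^{-1/2}$ directly, so $\langle y,\partial_y K(y,z)\rangle=-\tfrac12|y|^2(|y|^2+|z|^2)^{-3/2}\le 0$ and likewise $\langle z,\partial_z K(y,z)\rangle=-\tfrac12|z|^2(|y|^2+|z|^2)^{-3/2}\le 0$. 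Thus Theorem \ref{thmmain1} gives: (\ref{eq:reduced}) has no nontrivial solution for $p\ge 2^{\ast}_{\dim\mathbb{K}+1,n}=2^{\ast}_{N,k}$ and infinitely many for $p\in(2,2^{\ast}_{N,k})$. Pulling back through $\pi$ via Proposition \ref{propHopf} translates nonexistence into nonexistence of $\mathbb{S}_{\mathbb{K}}$-invariant solutions of (\ref{prob}) on $\Omega_n$, and existence of infinitely many $v$ into existence of infinitely many $\mathbb{S}_{\mathbb{K}}$-invariant $u=v\circ\pi$; I should note that distinct $v$'s give distinct $u$'s because $\pi$ is surjective.

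Finally I would identify the homotopy type of $\Omega_n=\pi^{-1}(U_n)$. The Hopf map $\pi$ restricts to a fiber bundle with fiber $\mathbb{S}_{\mathbb{K}}\simeq\mathbb{S}^{\dim\mathbb{K}-1}=\mathbb{S}^{\frac{N-2}{2}}$ over $\mathbb{R}^{\dim\mathbb{K}+1}\setminus\{0\}$ (after removing $\pi^{-1}(0)$, which our $U_n$ avoids), and $U_n$ deformation retracts onto $\{|y|=\tau\}\cong\mathbb{S}^{n}$ for $n\ge1$ (respectively onto a point for $n=0$); pulling this back, $\Omega_n$ deformation retracts onto the restriction of the Hopf bundle over $\mathbb{S}^n$ (resp. over a point). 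The remaining point is that this bundle is trivial — which holds because the relevant $\mathbb{S}^n$ sits in a contractible-after-retraction piece, or more directly because over $\mathbb{S}^n$ with $n\le\frac{N-4}{2}<\frac{N-2}{2}=\dim\mathbb{S}_{\mathbb{K}}$ the classifying map is null-homotopic — so $\Omega_n\simeq\mathbb{S}^{\frac{N-2}{2}}\times\mathbb{S}^{n}$ for $1\le n\le\frac{N-4}{2}$ and $\Omega_0\simeq\mathbb{S}^{\frac{N-2}{2}}$, as claimed. I expect this last step — the careful verification that the pulled-back sphere bundle over $\mathbb{S}^n$ is trivial, and the bookkeeping to arrange that the base sphere lands where the Hopf fibration is untwisted — to be the main technical obstacle; everything else is a routine combination of Proposition \ref{propHopf}, Theorem \ref{thmmain1}, and the explicit computation of $K$ and of the critical-exponent identity.
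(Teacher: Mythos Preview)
Your proposal is correct and follows exactly the paper's route: reduce via Proposition~\ref{propHopf} to problem~(\ref{probK}) on $U_n\subset\mathbb{R}^{\dim\mathbb{K}+1}$ with $K(x)=\tfrac{1}{2|x|}$, check the hypotheses of Theorem~\ref{thmmain1} (your explicit verification of the sign conditions on $K$ is more detail than the paper writes out), and pull back. For the homotopy type---the step you flag as the obstacle---the paper's argument is cleaner than your classifying-map sketch and avoids the difficulty that $\mathbb{S}^7$ is not a group: since $U_n$ is contractible \emph{in} $\mathbb{R}^{\dim\mathbb{K}+1}\smallsetminus\{0\}$ (it retracts to an $\mathbb{S}^n$ with $n\le\dim\mathbb{K}-2<\dim\mathbb{K}$, hence null-homotopic in $\mathbb{S}^{\dim\mathbb{K}}$), the restriction of the fibration $\pi$ over $U_n$ is fiber-homotopy trivial by a standard result on fibrations (Spanier, Ch.~2, \S8, Thm.~14), so $\Omega_n\simeq\mathbb{S}_{\mathbb{K}}\times U_n$ directly.
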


The question remains open as to whether for such domains other solutions
exist, which are not $\mathbb{S}_{\mathbb{K}}$-invariant, particularly for
$p\geq2_{N,k}^{\ast}$.

This paper is organized as follows: in Section \ref{sec:hm} we present the
basic notions and results of the theory of harmonic morphisms and prove
Proposition \ref{propHopf}. Section \ref{sec:existence} is devoted to proving
Theorem \ref{thmmain2}. Theorems \ref{thmmain1}, \ref{thmmain4} and
\ref{thmmain3} are proved in Section \ref{sec:nonexistence}.

\section{Harmonic morphisms}

\label{sec:hm}We recall some basic notions and give examples of harmonic
morphisms. A detailed discusion is given e.g. in \cite{bw, er, w}.

Let $(M,\mathfrak{g})$ and $(N,\mathfrak{h})$ be Riemannian manifolds of
dimensions $m$ and $n$ respectively. A smooth map $\pi:M\rightarrow N$ is
called \emph{horizontally weakly conformal} if for each $x\in M$ at which
\textrm{d}$\pi_{x}\neq0$ the differential \textrm{d}$\pi_{x}:T_{x}M\rightarrow
T_{\pi(x)}N$ is surjective and horizontally conformal, i.e. there exists a
number $\lambda(x)\neq0$ such that
\[
\mathfrak{h}(\mathrm{d}\pi_{x}X,\mathrm{d}\pi_{x}Y)=\lambda^{2}(x)\mathfrak{g}%
(X,Y)\text{\qquad for all }X,Y\in T_{x}^{H}M\text{,}%
\]
where $T_{x}^{H}M$ denotes the orthogonal complement of $\ker\left(
\mathrm{d}\pi_{x}\right)  .$ Defining $\lambda(x)=0$ if \textrm{d}$\pi_{x}=0$
we obtain a function $\lambda:M\rightarrow\lbrack0,\infty)$ called the
\emph{dilation of }$\pi.$ It is given by $\lambda^{2}(x)=\frac{\left\vert
\mathrm{d}\pi_{x}\right\vert ^{2}}{n},$ where $\left\vert \mathrm{d}\pi
_{x}\right\vert $ is the Hilbert-Schmidt norm of \textrm{d}$\pi_{x}.$ Hence,
it is a smooth function.

If $\pi$ has no critical points (i.e. \textrm{d}$\pi_{x}\neq0$ for all $x\in
M$) then it is called a \emph{conformal submersion}. If $\lambda\equiv1$ then
$\pi:M\rightarrow N$ is a \emph{Riemannian submersion}. Note that, if the
dilation is constant and non-zero, then $\pi$ is a Riemannian submersion up to
scale, i.e. it is a Riemannian submersion after a suitable homothetic change
of metric on the domain or codomain.

The \emph{tension field} $\tau(\pi)$ of a smooth map $\pi:M\rightarrow N$ is
defined as%
\[
\tau(\pi):=\text{Trace}_{\mathfrak{g}}\nabla\mathrm{d}\pi.
\]
Thus, $\tau(\pi)$ is a vector field along $\pi,$ i.e. a section of the
pullback bundle $\pi^{-1}TN.$ In charts,%
\[
\tau(\pi)=\mathfrak{g}^{ij}(\nabla_{\partial_{i}}\mathrm{d}\pi)(\partial
_{j}),
\]
that is,%
\begin{align*}
\tau^{\gamma}(\pi)  &  =\mathfrak{g}^{ij}(\nabla\mathrm{d}\pi^{\gamma}%
)_{ij}+\mathfrak{g}^{ij}\Gamma_{\alpha\beta}^{N\,\gamma}\pi_{i}^{\alpha}%
\pi_{j}^{\beta}\\
&  =\mathfrak{g}^{ij}\left(  \frac{\partial^{2}\pi^{\gamma}}{\partial
x^{i}\partial x^{j}}-\Gamma_{ij}^{M\,k}\frac{\partial\pi^{\gamma}}{\partial
x^{k}}+\Gamma_{\alpha\beta}^{N\,\gamma}\pi_{i}^{\alpha}\pi_{j}^{\beta}\right)
\\
&  =-\Delta_{M}\pi^{\gamma}+\mathfrak{g}^{ij}\Gamma_{\alpha\beta}^{N\,\gamma
}\pi_{i}^{\alpha}\pi_{j}^{\beta},\text{\qquad}1\leq\gamma\leq n,
\end{align*}
where $\Delta_{M}$ is the Laplace-Bertrami operator on $M$ (with the customary
sign convention of Riemannian geometry) and $\Gamma_{ij}^{M\,k}$ and
$\Gamma_{\alpha\beta}^{N\,\gamma}$ are the Christoffel symbols of\ $M $ and
$N$ respectively. The map $\pi:M\rightarrow N$ is called \emph{harmonic} if
$\tau(\pi)\equiv0.$ If, in addition, $\pi$ is horizontally weakly conformal,
then $\pi$ is called a \emph{harmonic morphism.} The main property of harmonic
morphisms is the following one.

\begin{proposition}
\label{prophm}A smooth map $\pi:M\rightarrow N$ is a harmonic morphism with
dilation $\lambda$ iff%
\[
\Delta_{M}(v\circ\pi)=\lambda^{2}\left[  (\Delta_{N}v)\circ\pi\right]
\]
for each smooth function $v:V\rightarrow\mathbb{R}$ defined on an open subset
$V$ of $N$ with $\pi^{-1}(V)\neq\emptyset.$
\end{proposition}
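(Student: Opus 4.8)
The plan is to reduce both implications to the composition formula for tension fields. For smooth maps $\varphi:M\rightarrow N$ and $\psi:N\rightarrow P$ one has the classical identity
\begin{equation*}
\tau(\psi\circ\varphi)=\mathrm{d}\psi(\tau(\varphi))+\mathrm{Trace}_{\mathfrak{g}}\big((\nabla\mathrm{d}\psi)(\mathrm{d}\varphi,\mathrm{d}\varphi)\big),
\end{equation*}
which also follows directly from the coordinate expression for $\tau$ displayed above (see \cite{bw,er,w}). Taking $\psi=v$ a smooth real function on an open set $V\subset N$, so that $P=\mathbb{R}$ is flat, $\tau(v)=-\Delta_{N}v$ and $\tau(v\circ\pi)=-\Delta_{M}(v\circ\pi)$, one obtains on $\pi^{-1}(V)$ the basic relation
\begin{equation*}
-\Delta_{M}(v\circ\pi)=\mathrm{d}v(\tau(\pi))+\mathrm{Trace}_{\mathfrak{g}}\big((\nabla\mathrm{d}v)(\mathrm{d}\pi,\mathrm{d}\pi)\big),\tag{A}
\end{equation*}
where $\mathrm{Trace}_{\mathfrak{g}}\big((\nabla\mathrm{d}v)(\mathrm{d}\pi,\mathrm{d}\pi)\big)(x):=\sum_{i}(\nabla\mathrm{d}v)_{\pi(x)}(\mathrm{d}\pi_{x}e_{i},\mathrm{d}\pi_{x}e_{i})$ for any $\mathfrak{g}$-orthonormal frame $(e_{i})$ of $TM$. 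Both directions then come out of (A) by suitable choices of $v$; this is essentially the Fuglede--Ishihara characterization.

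For the forward implication, assume $\pi$ is a harmonic morphism with dilation $\lambda$, so $\tau(\pi)\equiv0$, and fix $x\in M$. If $\mathrm{d}\pi_{x}\neq0$, choose a $\mathfrak{g}$-orthonormal basis $e_{1},\dots,e_{m}$ of $T_{x}M$ with $e_{1},\dots,e_{n}$ spanning $T_{x}^{H}M$ and $e_{n+1},\dots,e_{m}$ spanning $\ker\mathrm{d}\pi_{x}$; by horizontal conformality the vectors $\lambda(x)^{-1}\mathrm{d}\pi_{x}e_{1},\dots,\lambda(x)^{-1}\mathrm{d}\pi_{x}e_{n}$ form a $\mathfrak{h}$-orthonormal basis of $T_{\pi(x)}N$, so the last term of (A) equals $\lambda^{2}(x)\,\mathrm{Trace}_{\mathfrak{h}}(\nabla\mathrm{d}v)_{\pi(x)}=-\lambda^{2}(x)(\Delta_{N}v)(\pi(x))$, and (A) gives the desired identity at $x$. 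If $\mathrm{d}\pi_{x}=0$ then $\lambda(x)=0$, and both terms on the right of (A) vanish (the first because $\tau(\pi)\equiv0$), so the identity holds trivially at $x$.

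For the converse, suppose $\Delta_{M}(v\circ\pi)=\lambda^{2}[(\Delta_{N}v)\circ\pi]$ holds for every local $v$, with $\lambda:M\rightarrow\mathbb{R}$ a fixed function, and fix $x\in M$. Pick geodesic normal coordinates $y=(y^{1},\dots,y^{n})$ on a neighbourhood $V$ of $\pi(x)$ centred at $\pi(x)$, so that at $\pi(x)$ one has $y^{\alpha}=0$, $\mathfrak{h}_{\alpha\beta}=\delta_{\alpha\beta}$, $\nabla\mathrm{d}y^{\alpha}=0$, $\Delta_{N}y^{\alpha}=0$ and $\Delta_{N}(y^{\alpha}y^{\beta})=-2\delta^{\alpha\beta}$. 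Applying (A) with $v=y^{\alpha}$ and using the hypothesis gives $\mathrm{d}y^{\alpha}(\tau(\pi))(x)=-\Delta_{M}(y^{\alpha}\circ\pi)(x)=0$ for every $\alpha$, hence $\tau(\pi)(x)=0$; as $x$ is arbitrary, $\pi$ is harmonic. Next, applying (A) with $v=y^{\alpha}y^{\beta}$, where at $\pi(x)$ we have $\mathrm{d}(y^{\alpha}y^{\beta})=0$ and $(\nabla\mathrm{d}(y^{\alpha}y^{\beta}))_{\pi(x)}=\mathrm{d}y^{\alpha}\otimes\mathrm{d}y^{\beta}+\mathrm{d}y^{\beta}\otimes\mathrm{d}y^{\alpha}$, the right side of (A) reduces at $x$ to $2\langle\mathrm{grad}_{M}(y^{\alpha}\circ\pi),\mathrm{grad}_{M}(y^{\beta}\circ\pi)\rangle_{\mathfrak{g}}$, while the hypothesis makes the left side equal $2\lambda^{2}(x)\delta^{\alpha\beta}$; since $\mathrm{grad}_{M}(y^{\alpha}\circ\pi)=(\mathrm{d}\pi_{x})^{*}(\mathrm{grad}_{N}y^{\alpha})$ and $\{(\mathrm{grad}_{N}y^{\alpha})_{\pi(x)}\}$ is $\mathfrak{h}$-orthonormal, this yields $\mathfrak{g}\big((\mathrm{d}\pi_{x})^{*}u,(\mathrm{d}\pi_{x})^{*}w\big)=\lambda^{2}(x)\,\mathfrak{h}(u,w)$ for all $u,w\in T_{\pi(x)}N$, i.e. $\mathrm{d}\pi_{x}(\mathrm{d}\pi_{x})^{*}=\lambda^{2}(x)\,\mathrm{id}$. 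If $\lambda(x)=0$ this forces $(\mathrm{d}\pi_{x})^{*}=0$, hence $\mathrm{d}\pi_{x}=0$; and conversely $\mathrm{d}\pi_{x}=0$ forces $\lambda(x)=0$. If $\lambda(x)\neq0$ then $\mathrm{d}\pi_{x}$ is surjective, and writing $X=(\mathrm{d}\pi_{x})^{*}u\in T_{x}^{H}M$ one gets $\mathrm{d}\pi_{x}X=\lambda^{2}(x)u$ and $\mathfrak{g}(X,X)=\lambda^{2}(x)\mathfrak{h}(u,u)$, whence $\mathfrak{h}(\mathrm{d}\pi_{x}X,\mathrm{d}\pi_{x}X)=\lambda^{2}(x)\mathfrak{g}(X,X)$; by polarization $\pi$ is horizontally conformal at $x$ with factor $\lambda^{2}(x)$. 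Thus $\pi$ is horizontally weakly conformal, and being also harmonic it is a harmonic morphism whose dilation has square $\lambda^{2}$.

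The routine verifications are the sign bookkeeping in (A) and the elementary Hessian computations in normal coordinates. I expect the genuinely delicate point to be the converse: one must test the hypothesis against both the linear functions $y^{\alpha}$, which detect the tension field, and the quadratics $y^{\alpha}y^{\beta}$, which detect the operator $\mathrm{d}\pi_{x}(\mathrm{d}\pi_{x})^{*}$, and then recognise the resulting algebraic relation as horizontal weak conformality.
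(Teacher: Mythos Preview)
Your argument is correct. The paper itself does not prove this proposition at all: its entire proof reads ``See \cite[Proposition 4.2.3]{bw}.'' What you have written is precisely the standard Fuglede--Ishihara argument one finds in that reference: the composition law $\tau(v\circ\pi)=\mathrm{d}v(\tau(\pi))+\mathrm{Trace}_{\mathfrak{g}}\big((\nabla\mathrm{d}v)(\mathrm{d}\pi,\mathrm{d}\pi)\big)$, followed in the converse direction by testing against the coordinate functions $y^{\alpha}$ (to kill $\tau(\pi)$) and the quadratics $y^{\alpha}y^{\beta}$ (to extract $\mathrm{d}\pi_{x}(\mathrm{d}\pi_{x})^{*}=\lambda^{2}(x)\,\mathrm{id}$). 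So you have supplied the proof the paper delegates to the literature, and your identification of the ``delicate point'' is accurate: the whole content of the converse lies in choosing those two families of test functions.
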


\begin{proof}
See \cite[Proposition 4.2.3]{bw}.
\end{proof}

\begin{corollary}
\label{correduction}Let $\pi:M\rightarrow N$ be a harmonic morphism with
dilation $\lambda$, $\ a:V\rightarrow\mathbb{R}$ be a function defined on an
open subset $V$ of $N$ with $\pi^{-1}(V)\neq\emptyset,$ and $f:\mathbb{R}%
\rightarrow\mathbb{R}$. Assume there exists $\mu:V\rightarrow(0,\infty) $ such
that $\mu\circ\pi=\lambda^{2}$ on $\pi^{-1}(V).$ If $v:V\rightarrow\mathbb{R}$
solves%
\begin{equation}
\Delta_{N}v+\frac{a(y)}{\mu(y)}v=\frac{1}{\mu(y)}f(v), \label{eqN}%
\end{equation}
then $u:=v\circ\pi:\pi^{-1}(V)\rightarrow\mathbb{R}$ solves
\begin{equation}
\Delta_{M}u+\left(  a\circ\pi\right)  u=f(u). \label{eqM}%
\end{equation}
Conversely, if $\pi:\pi^{-1}(V)\rightarrow V$ is surjective and
$v:V\rightarrow\mathbb{R}$ is such that $u:=v\circ\pi:\pi^{-1}(V)\rightarrow
\mathbb{R}$ solves \emph{(\ref{eqM})} then $v$ solves \emph{(\ref{eqN})}.
\end{corollary}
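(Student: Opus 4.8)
The plan is to apply Proposition \ref{prophm} directly and then do a little bookkeeping with the conformal factor. First I would start from the hypothesis that $v:V\to\mathbb{R}$ solves \eqref{eqN}, set $u:=v\circ\pi$, and compute $\Delta_M u$. Since $\pi$ is a harmonic morphism with dilation $\lambda$, Proposition \ref{prophm} gives
\[
\Delta_M u=\Delta_M(v\circ\pi)=\lambda^2\bigl[(\Delta_N v)\circ\pi\bigr].
\]
Now I would substitute the equation satisfied by $v$: on $V$ we have $\Delta_N v=\dfrac{1}{\mu}f(v)-\dfrac{a}{\mu}v$, so composing with $\pi$ and using the hypothesis $\mu\circ\pi=\lambda^2$ on $\pi^{-1}(V)$,
\[
\Delta_M u=\lambda^2\left[\frac{1}{\mu\circ\pi}\,(f(v)\circ\pi)-\frac{a\circ\pi}{\mu\circ\pi}\,(v\circ\pi)\right]
=\bigl(f(v)\circ\pi\bigr)-(a\circ\pi)\,(v\circ\pi),
\]
where the factor $\lambda^2/(\mu\circ\pi)$ cancels to $1$. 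Since $f(v)\circ\pi=f(v\circ\pi)=f(u)$ pointwise and $v\circ\pi=u$, this rearranges to $\Delta_M u+(a\circ\pi)u=f(u)$, which is \eqref{eqM}. Note the identity $\lambda^2\cdot\frac{1}{\mu\circ\pi}=1$ requires $\mu$ (hence $\lambda^2$) to be nonvanishing, which is guaranteed by $\mu:V\to(0,\infty)$; at points where $\lambda=0$ one would have $\mu\circ\pi=0$, contradicting positivity of $\mu$, so in fact $\pi$ has no critical points over $\pi^{-1}(V)$.

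For the converse, suppose $\pi:\pi^{-1}(V)\to V$ is surjective and $u:=v\circ\pi$ solves \eqref{eqM}. Running the same computation backwards, Proposition \ref{prophm} still gives $\Delta_M(v\circ\pi)=\lambda^2[(\Delta_N v)\circ\pi]$, so from \eqref{eqM} we get
\[
\lambda^2\bigl[(\Delta_N v)\circ\pi\bigr]=f(u)-(a\circ\pi)u=\bigl(f(v)\circ\pi\bigr)-(a\circ\pi)(v\circ\pi).
\]
Dividing by $\lambda^2=\mu\circ\pi>0$ yields
\[
(\Delta_N v)\circ\pi=\frac{1}{\mu\circ\pi}\bigl(f(v)\circ\pi\bigr)-\frac{a\circ\pi}{\mu\circ\pi}\,(v\circ\pi)=\left(\frac{1}{\mu}f(v)-\frac{a}{\mu}v\right)\circ\pi.
\]
Here surjectivity of $\pi$ onto $V$ is exactly what lets me conclude that the two functions on $V$ whose pullbacks agree are themselves equal, i.e. $\Delta_N v=\frac{1}{\mu}f(v)-\frac{a}{\mu}v$ on $V$, which is \eqref{eqN}.

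I do not expect any serious obstacle here; the corollary is essentially a substitution into Proposition \ref{prophm}. The only points requiring a moment of care are: (i) verifying that the conformal factor cancels correctly, which is why the hypothesis is phrased as $\mu\circ\pi=\lambda^2$ rather than the other way around, and (ii) in the converse direction, using surjectivity to descend an equality of pulled-back functions on $\pi^{-1}(V)$ to an equality on $V$ — without surjectivity one would only know the identity holds on the image $\pi(\pi^{-1}(V))$. Both are routine. When this is later applied to prove Proposition \ref{propHopf}, the Hopf map will be shown (in the body of Section \ref{sec:hm}) to be a harmonic morphism with dilation $\lambda^2(x)=2|\pi(x)|$ wait—more precisely with $\lambda^2=4|z_1|^2+4|z_2|^2$ expressible through $\pi$—so that one takes $\mu(y)=2|y|$ and the coefficient $\frac{a}{2|x|}$ in \eqref{prob2} matches $\frac{a}{\mu}$ in \eqref{eqN}; but that verification belongs to the proof of Proposition \ref{propHopf}, not to this corollary.
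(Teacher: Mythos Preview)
Your argument is correct and is exactly the approach the paper intends: the paper's own proof reads in its entirety ``This follows easily from Proposition \ref{prophm},'' and what you have written is simply that easy derivation spelled out, including the observation that $\mu>0$ forces $\lambda\neq 0$ on $\pi^{-1}(V)$ and that surjectivity is what allows the descent in the converse direction.
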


\begin{proof}
This follows easily from Proposition \ref{prophm}.
\end{proof}

Next we give some examples of harmonic morphisms.

\begin{proposition}
Let $\pi:M\rightarrow N$ be a Riemannian submersion. Then $\pi$ is a harmonic
map iff each fiber $\pi^{-1}(y)$ is a minimal submanifold of $M$ (i.e. the
mean curvature of $\pi^{-1}(y)$ in $M$ is zero).
\end{proposition}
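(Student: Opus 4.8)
The plan is to compute the tension field $\tau(\pi)$ of a Riemannian submersion in terms of the geometry of its fibers and show it is (up to sign and a horizontal component that vanishes automatically) the mean curvature vector field of the fibers. First I would recall the O'Neill fundamental tensors $T$ and $A$ for a Riemannian submersion $\pi:M\to N$: for vector fields $E,F$ on $M$, $T_EF := \mathcal{H}\nabla_{\mathcal{V}E}\mathcal{V}F + \mathcal{V}\nabla_{\mathcal{V}E}\mathcal{H}F$ and $A_EF := \mathcal{H}\nabla_{\mathcal{H}E}\mathcal{V}F + \mathcal{V}\nabla_{\mathcal{H}E}\mathcal{H}F$, where $\mathcal{V}$ and $\mathcal{H}$ are the vertical and horizontal projections. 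The key classical fact is that for vertical vector fields $V,W$, the quantity $T_VW$ is the second fundamental form of the fiber $\pi^{-1}(y)$ (viewed as a submanifold of $M$) in the normal (horizontal) direction.

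Next I would evaluate $\tau(\pi) = \mathrm{Trace}_{\mathfrak g}\nabla\mathrm d\pi$ by choosing at a point $x$ a local orthonormal frame adapted to the splitting $T_xM = \mathcal{V}_x \oplus \mathcal{H}_x$, say $\{U_1,\dots,U_r\}$ vertical and $\{X_1,\dots,X_n\}$ horizontal (the $X_i$ taken to be basic, i.e. $\pi$-related to an orthonormal frame on $N$). Then $(\nabla\mathrm d\pi)(E,F) = \nabla^\pi_E(\mathrm d\pi\, F) - \mathrm d\pi(\nabla_E F)$. On the horizontal basic frame the contribution vanishes because $\pi$ restricted to horizontal directions is essentially an isometry onto $TN$ and basic fields are $\pi$-related, so $\sum_i(\nabla\mathrm d\pi)(X_i,X_i)$ reduces to the trace of $\nabla\mathrm d(\mathrm{isometry})$, which is zero. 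On the vertical frame, $\mathrm d\pi(U_j)=0$, so $(\nabla\mathrm d\pi)(U_j,U_j) = -\mathrm d\pi(\nabla_{U_j}U_j) = -\mathrm d\pi(\mathcal{H}\nabla_{U_j}U_j) = -\mathrm d\pi(T_{U_j}U_j)$. Summing over $j$ gives $\tau(\pi) = -\mathrm d\pi\bigl(\sum_j T_{U_j}U_j\bigr) = -\mathrm d\pi(r H)$, where $H := \frac{1}{r}\sum_j T_{U_j}U_j$ is precisely the mean curvature vector of the fiber. Since $\mathrm d\pi$ is injective on horizontal vectors and $rH$ is horizontal, $\tau(\pi)\equiv 0$ if and only if $H\equiv 0$, i.e. every fiber is minimal.

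I would organize the write-up in two parts. The minor obstacle is simply recalling/citing the O'Neill formalism cleanly and justifying that the horizontal part of $\tau(\pi)$ vanishes for a Riemannian submersion — this is standard but needs the observation that the $A$-tensor, which could contribute $\mathrm d\pi$ of vertical components of $\nabla_{X_i}X_i$, drops out because $\mathrm d\pi$ kills vertical vectors, and the remaining horizontal part is the tension of a local isometry. The main substantive point, which I expect to be the heart of the argument, is the identification $T_VW = \mathrm{II}^{\pi^{-1}(y)}(V,W)$ for vertical $V,W$: this follows from the Gauss formula for the fiber as a submanifold, since $\nabla^{\pi^{-1}(y)}_VW = \mathcal{V}\nabla_VW$ (the fibers carry the induced metric and $\mathcal V\nabla$ restricted to vertical fields is the fiber's Levi-Civita connection, as $\pi$ is Riemannian) and hence the normal component of $\nabla_VW$ is exactly $\mathcal H\nabla_VW = T_VW$. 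Once this is in place the trace computation above is a short formal calculation and gives the equivalence. Alternatively, and perhaps more in the spirit of the preceding material, one can cite \cite{bw} for the formula $\tau(\pi) = -r\,\mathrm d\pi(H)$ (the "fundamental equation" for Riemannian submersions) and merely note that $\mathrm d\pi$ is a fiberwise isomorphism on the horizontal distribution to conclude, but I would prefer to include the brief derivation for completeness.
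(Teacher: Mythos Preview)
Your argument is correct: the formula $\tau(\pi)=-r\,\mathrm d\pi(H)$ for a Riemannian submersion, derived via an adapted orthonormal frame and the identification $T_VW=\mathrm{II}^{\pi^{-1}(y)}(V,W)$, is exactly the standard route, and your handling of the horizontal contribution (basic fields, $\pi$-relatedness killing the difference $\nabla^\pi_{X_i}\mathrm d\pi X_i-\mathrm d\pi\nabla_{X_i}X_i$, vertical components annihilated by $\mathrm d\pi$) is right.

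The paper, however, does not prove this proposition at all: its entire proof is the one-line citation ``See \cite[(1.12)]{er}.'' So your write-up is not so much a different approach as a self-contained exposition of what the paper outsources to Eells--Ratto (and what also appears, in essentially the form you give, in \cite{bw}). What you gain is independence from the reference and a transparent link between the O'Neill tensor $T$ and the fiber second fundamental form; what the paper gains is brevity, since the result is classical and peripheral to its main line of argument. If you intend to include the derivation, the only cosmetic point to tighten is the phrase ``tension of a local isometry'': a Riemannian submersion is not a local isometry, so it is cleaner to say directly that for basic $X_i$ one has $\mathcal H\nabla^M_{X_i}X_i$ $\pi$-related to $\nabla^N_{\bar X_i}\bar X_i$, whence $(\nabla\mathrm d\pi)(X_i,X_i)=0$.
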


\begin{proof}
See \cite[(1.12)]{er}.
\end{proof}

Consequently, harmonic morphisms with constant non-zero dilation are simply
Riemannian submersions with minimal fibres, up to scale. Some interesting
examples are the Hopf fibrations.

\begin{example}
\label{exHfib}The Hopf fibrations $\mathbb{S}^{n}\rightarrow\mathbb{R}P^{n}$,
$\mathbb{S}^{2n+1}\rightarrow\mathbb{C}P^{n}$, $\mathbb{S}^{4n+3}%
\rightarrow\mathbb{H}P^{n}$ and $\mathbb{S}^{15}\rightarrow\mathbb{S}^{8}$ are
Riemannian submersions (up to scale) with totally geodesic, and so minimal,
fibres, see \cite[Examples 2.4.14-17]{bw}.
\end{example}

\begin{example}
The Hopf fibration $\mathbb{S}^{2n+1}\rightarrow\mathbb{C}P^{n}$ factors
through the double covering $\mathbb{S}^{2n+1}\rightarrow\mathbb{R}P^{2n+1}$
to give a Riemannian submersion $\mathbb{R}P^{2n+1}\rightarrow\mathbb{C}P^{n}$
with totally geodesic fibres. Similarly, one obtains a Riemannian submersion
$\mathbb{C}P^{2n+1}\rightarrow\mathbb{H}P^{n}$ with totally geodesic fibres.
\end{example}

The main example for our purposes is the following one.

\begin{example}
\label{exHmap}The Hopf maps $\pi:\mathbb{R}^{N}=\mathbb{K}\times
\mathbb{K}\rightarrow\mathbb{K}\times\mathbb{R}=\mathbb{R}^{\dim\mathbb{K}+1}$
given by%
\[
\pi(z_{1},z_{2}):=(2\overline{z_{1}}z_{2},\left\vert z_{1}\right\vert
^{2}-\left\vert z_{2}\right\vert ^{2}),
\]
with $\mathbb{K=R}$, $\mathbb{C}$, $\mathbb{H}$, or $\mathbb{O}$ respectively,
are harmonic morphisms \cite[Corollary 5.3.3]{bw} with dilation $\lambda
(x,y)=\sqrt{2(\left\vert x\right\vert ^{2}+\left\vert y\right\vert ^{2})}.$
Their restrictions to the unit sphere are the Hopf fibrations of \emph{Example
\ref{exHfib}} with $n=1.$ A simple computation shows that $\left\vert
\pi(x,y)\right\vert =\left\vert x\right\vert ^{2}+\left\vert y\right\vert
^{2}.$ Hence, $\lambda^{2}(x,y)=2\left\vert \pi(x,y)\right\vert .$
\end{example}

\bigskip

\noindent\textbf{Proof of Proposition \ref{propHopf}.}\emph{\qquad}Apply
Corollary \ref{correduction} to Example \ref{exHmap}. \qed\noindent

\section{Existence}

\label{sec:existence}Proposition \ref{propHopf} suggests considering the
problem%
\begin{equation}
\left\{
\begin{array}
[c]{ll}%
-\Delta v=K(x)\left\vert v\right\vert ^{2^{\ast}-2}v & \text{in }U,\\
\hspace{0.6cm}v=0 & \text{on }U,
\end{array}
\right.  \tag{$\wp_U^\ast$}\label{prob4}%
\end{equation}
where $U$ is a bounded smooth domain in $\mathbb{R}^{M}$, $K\in\mathcal{C}%
^{0}(\mathbb{R}^{M})$ is strictly positive on $\overline{U}$ and $2^{\ast
}:=\frac{2M}{M-2}$ is Sobolev's critical exponent. We assume that $U$ and $K$
are $G$-invariant for some closed subgroup $G$ of $O(M)$. Then, the principle
of symmetric criticality \cite{p}\ asserts that the $G$-invariant solutions of
problem (\ref{prob4}) are the critical points of the restriction of the
functional%
\[
J(v):=\frac{1}{2}\int_{U}\left\vert \nabla v\right\vert ^{2}-\frac{1}{2^{\ast
}}\int_{U}K(x)\left\vert v\right\vert ^{2^{\ast}}%
\]
to the space of $G$-invariant functions
\[
H_{0}^{1}(U)^{G}:=\{v\in H_{0}^{1}(U):v(gx)=v(x)\text{ \ for all }g\in
G,\text{ }x\in U\}.
\]
We shall say that $J$ satisfies the Palais-Smale condition $(PS)_{c}^{G}$ in
$H_{0}^{1}(U)$ if every sequence $(v_{n})$ such that
\[
v_{n}\in H_{0}^{1}(U)^{G},\qquad J(v_{n})\rightarrow c,\qquad\nabla
J(v_{n})\rightarrow0,
\]
contains a convergent subsequence. Let $S$ be the best Sobolev constant for
the embedding $D^{1,2}(\mathbb{R}^{M})\hookrightarrow L^{2^{\ast}}%
(\mathbb{R}^{M}).$ The following result was proved in \cite[Corollary 2]{c}.

\begin{proposition}
\label{propPS}$J$ satisfies condition $(PS)_{c}^{G}$ in $H_{0}^{1}(U)$ for
every%
\[
c<\left(  \min_{x\in\overline{U}}\frac{\#Gx}{K(x)^{\frac{M-2}{2}}}\right)
\frac{1}{M}S^{M/2}.
\]
In particular, if $\#Gx=\infty$ for all $x\in\overline{U},$ then $J$ satisfies
condition $(PS)_{c}^{G}$ in $H_{0}^{1}(U)$ for every $c\in\mathbb{R}$.
\end{proposition}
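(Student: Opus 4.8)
The plan is to run the concentration--compactness method in the $G$-invariant setting; the bound imposed on $c$ is precisely what is needed to exclude any loss of compactness. Let $(v_n)$ satisfy $v_n\in H_0^1(U)^G$, $J(v_n)\to c$ and $\nabla J(v_n)\to 0$. First I would observe boundedness: the identity $J(v_n)-\tfrac1{2^\ast}\langle\nabla J(v_n),v_n\rangle=\tfrac1M\int_U|\nabla v_n|^2$ gives $\int_U|\nabla v_n|^2\le M\bigl(c+o(1)\|v_n\|\bigr)$, whence $(v_n)$ is bounded in $H_0^1(U)^G$. After passing to a subsequence, $v_n\rightharpoonup v$ in $H_0^1(U)^G$, $v_n\to v$ in $L^q(U)$ for every $q<2^\ast$, and $v_n\to v$ a.e. in $U$. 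Since $\langle\nabla J(v_n),\varphi\rangle\to\langle\nabla J(v),\varphi\rangle=0$ for every $G$-invariant $\varphi$, the principle of symmetric criticality \cite{p} shows that $v$ is a weak solution of \eqref{prob4}; testing the equation against $v$ gives $J(v)=\tfrac1M\int_U|\nabla v|^2\ge 0$.

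Next I would set $w_n:=v_n-v$; then $w_n$ is $G$-invariant and $w_n\rightharpoonup 0$. By the Brezis--Lieb lemma (in the version with a bounded continuous weight) and Lions' concentration--compactness principle, after a further subsequence $|\nabla w_n|^2\rightharpoonup\mu$ and $|w_n|^{2^\ast}\rightharpoonup\nu$ weakly-$\ast$ as measures on $\overline U$, with $\nu=\sum_{j\in J}\nu_j\delta_{x_j}$ purely atomic ($x_j\in\overline U$, $\nu_j>0$), $\mu\ge\sum_{j\in J}\mu_j\delta_{x_j}$ and $\mu_j\ge S\,\nu_j^{2/2^\ast}$. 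Since $w_n$ and $K$ are $G$-invariant, $\mu$ and $\nu$ are $G$-invariant measures, so $J$ is $G$-stable and $\mu_j$, $\nu_j$, $K(x_j)$ are constant along each orbit $Gx_j$. From $\langle\nabla J(v_n),v_n\rangle\to 0$, $\langle\nabla J(v),v\rangle=0$ and the Brezis--Lieb splittings of $\int_U|\nabla v_n|^2$ and $\int_U K|v_n|^{2^\ast}$, one gets $\int_U|\nabla w_n|^2-\int_U K|w_n|^{2^\ast}\to 0$, so both quantities converge to a common limit $\ell\ge 0$; applying the same splitting to $J(v_n)\to c$ yields $c=J(v)+\tfrac1M\ell$, whence $\ell=M\bigl(c-J(v)\bigr)\le Mc$.

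The crucial step, which I expect to be the only real difficulty, is a local estimate at each atom. Assume $J\ne\emptyset$ and fix $x_0:=x_j$. Testing the convergence $\langle\nabla J(v_n),\phi_\varepsilon v_n\rangle\to 0$ against a cut-off $\phi_\varepsilon(x):=\phi\bigl((x-x_0)/\varepsilon\bigr)$, expanding, and letting first $n\to\infty$ and then $\varepsilon\to 0$ — the only term requiring care is $\int_U v_n\,\nabla v_n\cdot\nabla\phi_\varepsilon$, handled as usual by the Cauchy--Schwarz inequality together with the fact that no $\nu$-mass accumulates on the shrinking annuli carrying $\nabla\phi_\varepsilon$, and $\phi_\varepsilon v_n$ being admissible in $H_0^1(U)$ whether or not $x_0\in\partial U$ — yields the identity $\mu(\{x_0\})=K(x_0)\,\nu(\{x_0\})$, i.e. $\mu_j=K(x_j)\,\nu_j$. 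Combining this with $\mu_j\ge S\,\nu_j^{2/2^\ast}$ and $1-\tfrac2{2^\ast}=\tfrac2M$ forces $\nu_j\ge S^{M/2}K(x_j)^{-M/2}$, and hence
\[
\mu_j=K(x_j)\,\nu_j\ \ge\ S^{M/2}\,K(x_j)^{-\frac{M-2}{2}}.
\]

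Finally, to conclude, suppose toward a contradiction that $J\ne\emptyset$ and pick any atom $x_{j_0}$. Its orbit $Gx_{j_0}$ consists of $\#Gx_{j_0}$ distinct points, each carrying $\mu$-mass at least $\mu_{j_0}$, so
\[
\ell=\lim_{n\to\infty}\int_U|\nabla w_n|^2\ \ge\ \mu(\overline U)\ \ge\ (\#Gx_{j_0})\,\mu_{j_0}\ \ge\ \frac{\#Gx_{j_0}}{K(x_{j_0})^{(M-2)/2}}\,S^{M/2}\ \ge\ \Bigl(\min_{x\in\overline U}\frac{\#Gx}{K(x)^{(M-2)/2}}\Bigr)S^{M/2}.
\]
Together with $\ell\le Mc$ this forces $c\ge\bigl(\min_{x\in\overline U}\#Gx/K(x)^{(M-2)/2}\bigr)\tfrac1M S^{M/2}$, contradicting the hypothesis. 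Hence $J=\emptyset$, so $w_n\to 0$ strongly in $H_0^1(U)$ and $v_n\to v$, which proves $(PS)_c^G$. For the last assertion: if $\#Gx=\infty$ for every $x\in\overline U$, then a single atom already gives $\ell=+\infty$, incompatible with $\ell\le Mc$; so no concentration is possible and $(PS)_c^G$ holds for every $c\in\mathbb R$.
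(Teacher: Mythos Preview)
Your argument is correct and self-contained. The paper itself does not prove this proposition: it simply refers to \cite[Corollary~2]{c}. There the result is obtained via a Struwe-type global compactness description of $G$-invariant Palais--Smale sequences for the critical problem with weight, i.e.\ one writes $v_n$, up to a strongly convergent remainder, as $v$ plus a finite sum of rescaled Aubin--Talenti instantons, each contributing $K(x_j)^{-(M-2)/2}\tfrac1M S^{M/2}$ to the energy and occurring in full $G$-orbits by the symmetry. Your route through Lions' second concentration--compactness lemma is an equally standard alternative: in place of the explicit profile decomposition you use only the measure-theoretic relations $\mu_j\ge S\,\nu_j^{2/2^\ast}$ and $\mu_j=K(x_j)\nu_j$, which is enough here since all that is needed is a lower bound on the defect of compactness $\ell$, not the actual shape of the concentrating pieces. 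The trade-off is that the global compactness approach yields the stronger information that every non-compact PS sequence must carry energy equal to $J(v)$ plus an \emph{integer} multiple of bubble energies, while your argument gives the threshold more directly and with less machinery. One cosmetic point: you use $J$ both for the functional and for the index set of atoms; renaming the latter would improve readability.
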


Fix a closed subgroup $\Gamma$ of $O(M)$ and a nonempty $\Gamma$-invariant
bounded smooth domain $D$ in $\mathbb{R}^{M}$ such that $\#\Gamma x=\infty
\ $for all $x\in D.$ Then, the following holds.

\begin{theorem}
\label{thmcfK}Assume that $K$ is $\Gamma$-invariant. Then, there exists an
increasing sequence $(\ell_{m})$ of positive real numbers, depending only on
$\Gamma,$ $D$ and $K$, with the following property: if $U$ contains $D$ and if
it is invariant under the action of a closed subgroup $G$ of $\Gamma$ for
which%
\[
\min_{x\in\overline{U}}\frac{\#Gx}{K(x)^{\frac{M-2}{2}}}>\ell_{m}%
\]
holds, then problem \emph{(\ref{prob4})} has at least $m$ pairs of\ $G$%
-invariant solutions $\pm v_{1},\ldots,\pm v_{m}$ such that $v_{1}$ is
positive, $v_{2},\ldots,v_{m}$ change sign, and
\[
\int_{U}\left\vert \nabla v_{k}\right\vert ^{2}\leq\ell_{k}S^{M/2}\text{\qquad
for every }k=1,\ldots,m.
\]

\end{theorem}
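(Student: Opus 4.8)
The plan is to obtain the $G$-invariant solutions as critical points of the restriction $J|_{H_0^1(U)^G}$ via a minimax argument based on the genus (or cohomological index), exploiting the crucial feature that, by Proposition \ref{propPS}, the Palais-Smale condition holds at \emph{all} levels as soon as the relevant orbits are infinite. The first step is to set up the variational framework: since $J$ is even and of class $\mathcal{C}^1$ on $H_0^1(U)^G$, and since $K>0$ on $\overline U$, one works on the Nehari-type sublevel structure and defines, for each $m\ge 1$,
\[
c_m(U):=\inf_{A\in\mathcal{A}_m}\ \sup_{v\in A}\ J^{\infty}(v),
\]
where $\mathcal{A}_m$ is the family of symmetric (i.e. $A=-A$), compact subsets $A\subset N(U)^G$ of the $G$-invariant Nehari manifold of $J$ with genus $\ge m$, and $J^\infty$ denotes the value of $J$ on the Nehari manifold (equivalently one may work with the free functional and symmetric sets of the unit sphere, using the usual homotopy between the Nehari manifold and the sphere). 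Standard deformation-lemma arguments, valid because $(PS)_c^G$ holds for every $c\in\mathbb{R}$ when $\#\Gamma x=\infty$ on $D\subset U$, show that each $c_m(U)$ is a critical value, that $c_m(U)$ is nondecreasing in $m$, and that the first critical level $c_1(U)$ is attained by a positive solution while the higher levels, being strictly above $c_1(U)$ after passing to distinct levels (or handled by the standard trick of counting pairs when levels coincide), are attained by sign-changing solutions.

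The second, and main, step is to produce the numbers $\ell_m$ depending only on $\Gamma$, $D$ and $K$, and to bound $c_m(U)$ from above by $\ell_m S^{M/2}$ \emph{uniformly} over all admissible $U\supset D$. The key monotonicity is that extending the domain can only lower the minimax values: if $D\subset U$, then $H_0^1(D)^\Gamma\subset H_0^1(U)^G$ (a $\Gamma$-invariant function supported in $D$ is in particular $G$-invariant, since $G\subset\Gamma$), and on such functions the two functionals coincide. Hence $c_m(U)\le c_m(D)$, where the right-hand side is the $m$-th symmetric minimax level of $J$ restricted to $H_0^1(D)^\Gamma$ — a quantity that depends only on $\Gamma$, $D$ and $K$. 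I would therefore \emph{define}
\[
\ell_m:=\frac{M\, c_m(D)}{S^{M/2}},
\]
after first checking that $c_m(D)<\infty$ for every $m$: this is where the infinite orbits of $\Gamma$ on $D$ are used a second time, to build, for each $m$, a symmetric compact subset of $N(D)^\Gamma$ of genus $\ge m$ on which $J$ is bounded (for instance by taking $m$ translated/scaled bubbles placed along an infinite $\Gamma$-orbit and $\Gamma$-averaging, or by a dimension-counting argument on $\Gamma$-invariant eigenfunction spans). The sequence $(\ell_m)$ is then automatically increasing (after discarding repetitions, or replacing $\ell_m$ by $\max_{k\le m}\ell_k$), and positive.

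The third step closes the loop: under the hypothesis $\min_{x\in\overline U}\#Gx / K(x)^{(M-2)/2}>\ell_m$, Proposition \ref{propPS} guarantees $(PS)_c^G$ for all $c< \big(\min_{x\in\overline U}\#Gx/K(x)^{(M-2)/2}\big)\frac{1}{M}S^{M/2}$, and in particular for $c=c_j(U)$ for every $j\le m$ since $c_j(U)\le c_m(U)\le c_m(D)=\ell_m\frac{1}{M}S^{M/2}$ lies strictly below that threshold. Thus the first $m$ minimax levels are genuine critical values, yielding $m$ pairs $\pm v_1,\dots,\pm v_m$; $v_1$ is positive and $v_2,\dots,v_m$ change sign by the level ordering and the standard argument that the least positive critical level is $c_1$. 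Finally the energy bound follows from $\int_U|\nabla v_k|^2 = \frac{2M}{M-2}\cdot\frac{?}{}$ — more precisely, on the Nehari manifold $J(v_k)=\frac1M\int_U|\nabla v_k|^2$, so $\int_U|\nabla v_k|^2 = M\,c_k(U)\le M\,c_k(D)=\ell_k S^{M/2}$. The main obstacle is the second step: establishing \emph{finiteness} of $c_m(D)$ for every $m$, i.e. constructing high-genus symmetric subsets of the $\Gamma$-invariant Nehari manifold on which $J$ stays bounded — this is exactly where the assumption $\#\Gamma x=\infty$ on $D$ must be leveraged, and it is the delicate point that makes the whole scheme depend only on $\Gamma$, $D$ and $K$ rather than on $U$.
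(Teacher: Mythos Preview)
Your overall architecture---define $\Gamma$-invariant minimax levels on $D$, use the inclusion $H_0^1(D)^\Gamma \subset H_0^1(U)^G$ to bound the $G$-invariant levels on $U$ uniformly, and then invoke Proposition~\ref{propPS}---is exactly the paper's strategy. The substantive difference is in how the test sets are produced. The paper does not work with genus on the Nehari manifold; instead it takes $k$ pairwise disjoint $\Gamma$-invariant smooth subdomains $D_1,\ldots,D_k\subset D$ (such tuples exist for every $k$ because $D$ is open and $\Gamma$-invariant), solves $(\wp_{D_i}^{\ast})$ in each by the mountain pass theorem (Palais--Smale holds at all levels there since $\#\Gamma x=\infty$), and sets $c_k:=\inf\sum_{i=1}^k J(\omega_{D_i})$ over all such tuples, with $\ell_k:=Mc_k/S^{M/2}$. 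The span $W_k=\mathrm{span}\{\omega_{D_1},\ldots,\omega_{D_k}\}$ is then a $k$-dimensional subspace of $H_0^1(U)^G$ on which $\sup_{W_k}J\le\sum_i J(\omega_{D_i})$; this plays the role of your abstract genus-$k$ set and makes finiteness of $c_k$ immediate. The construction also yields $c_{k-1}+c_1\le c_k$, hence a \emph{strictly} increasing sequence $(\ell_k)$, which your genus levels do not give automatically (your fix ``replace $\ell_m$ by $\max_{k\le m}\ell_k$'' does not produce a strictly increasing sequence either).

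Two places where your proposal is genuinely loose. First, the mechanisms you suggest for building genus-$m$ $\Gamma$-invariant test sets---placing bubbles along an infinite orbit and $\Gamma$-averaging, or spanning $\Gamma$-invariant eigenfunctions---do not work as stated: averaging a bubble over a compact infinite orbit destroys the localization and yields overlapping supports, so the bound on $\sup J$ is unclear, while the eigenfunction span gives no a priori control of $\sup J$ in terms of $D$ and $K$ alone. The disjoint-subdomain construction is precisely the device that resolves this and it is the point you flag as ``the delicate point.'' Second, your assertion that levels above $c_1$ automatically give sign-changing solutions is not correct: positive critical points with energy strictly above the ground state can exist. The paper obtains $v_2,\ldots,v_m$ sign-changing by invoking (an adaptation of) Theorem~3.7 in \cite{cp}, which is a genuine sign-changing minimax theorem; some such ingredient is needed and cannot be replaced by level ordering alone.
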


\begin{proof}
For $K=1$ this was proved in \cite[Theorem 1]{cf}. The proof for general $K$
goes through with minor modifications. We sketch it here for the reader's
convenience. Let $\mathcal{P}_{1}(D)$ be the set of all nonempty $\Gamma
$-invariant bounded smooth domains contained in $D,$ and define%
\[
\mathcal{P}_{k}(D):=\{(D_{1},\mathcal{\ldots},D_{k}):D_{i}\in\mathcal{P}%
_{1}(D)\text{, \ }D_{i}\cap D_{j}=\emptyset\text{ if }i\neq j\}.
\]
Note that $\mathcal{P}_{k}(D)\neq\emptyset$ for every $k\in\mathbb{N}.$ Since
$\#\Gamma x=\infty$ for all $x\in D_{i},$ Proposition \ref{propPS} allows to
apply the mountain pass theorem \cite{ar} to obtain a nontrivial least energy
$\Gamma$-invariant solution $\omega_{D_{i}}\ $to problem $(\wp_{D_{i}}^{\ast
}).$ Extending $\omega_{D_{i}}$ by zero outside $D_{i}$ we have that
$\omega_{D_{i}}\in H_{0}^{1}(\Omega)^{G}$ and%
\begin{equation}
J(\omega_{D_{i}})=\max_{t\geq0}J(t\omega_{D_{i}}). \label{mountainpass}%
\end{equation}
We define%
\[
c_{k}:=\inf\left\{
%TCIMACRO{\tsum \limits_{i=1}^{k}}%
%BeginExpansion
{\textstyle\sum\limits_{i=1}^{k}}
%EndExpansion
J(\omega_{D_{i}}):(D_{1},\mathcal{\ldots},D_{k})\in\mathcal{P}_{k}(D)\right\}
\text{\qquad and\qquad}\ell_{k}:=\left(  \frac{1}{M}S^{M/2}\right)  ^{-1}%
c_{k}.
\]
Note that $c_{1}=J(\omega_{D})>0$ and that $J(\omega_{D_{i}})\geq c_{1}.$
Therefore,%
\[
c_{k-1}+c_{1}\leq%
%TCIMACRO{\tsum \limits_{i=1}^{k}}%
%BeginExpansion
{\textstyle\sum\limits_{i=1}^{k}}
%EndExpansion
J(\omega_{D_{i}})
\]
for every $(D_{1},\mathcal{\ldots},D_{k})\in\mathcal{P}_{k}(D)$, $k\geq2.$ It
follows that
\[
c_{k-1}+c_{1}\leq c_{k}\text{\qquad and\qquad}\ell_{k-1}+\ell_{1}\leq\ell
_{k}.
\]
Let $m\in\mathbb{N}$ and let $\Omega$ be a bounded smooth domain containing
$D,$ which is invariant under the action of a closed subgroup $G$ of $\Gamma$
for which%
\begin{equation}
\min_{x\in\overline{U}}\frac{\#Gx}{K(x)^{\frac{M-2}{2}}}>\ell_{m}
\label{hypothesis}%
\end{equation}
holds. Given $\varepsilon\in(0,c_{1})$ with $c_{m}+\varepsilon<\left(
\min_{x\in\overline{U}}\frac{\#Gx}{K(x)^{\frac{M-2}{2}}}\right)  \frac{1}%
{M}S^{M/2},$ we choose $(D_{1},\mathcal{\ldots},D_{m})\in\mathcal{P}_{m}(D)$
such that%
\[
c_{m}\leq%
%TCIMACRO{\tsum \limits_{i=1}^{m}}%
%BeginExpansion
{\textstyle\sum\limits_{i=1}^{m}}
%EndExpansion
J(\omega_{D_{i}})<c_{m}+\varepsilon.
\]
For each $k=1,\ldots,m,$ let $W_{k}$ be the subspace of $H_{0}^{1}(\Omega
)^{G}$ generated by $\{\omega_{D_{1}},\ldots,\omega_{D_{k}}\}$ and
$d_{k}:=\sup_{W_{k}}J.$ Then, $\dim W_{k}=k$ and identity (\ref{mountainpass})
implies that
\[
d_{k}=\sup_{W_{k}}J\leq%
%TCIMACRO{\tsum \limits_{i=1}^{k}}%
%BeginExpansion
{\textstyle\sum\limits_{i=1}^{k}}
%EndExpansion
J(\omega_{D_{i}})<\left(  \min_{x\in\overline{U}}\frac{\#Gx}{K(x)^{\frac
{M-2}{2}}}\right)  \frac{1}{M}S^{M/2}.
\]
Then, by Proposition \ref{propPS}, $J$ satisfies $(PS)_{c}^{G}$ in $H_{0}%
^{1}(\Omega)$ for all $c\leq d_{k},$ so the mountain pass theorem \cite{ar}
yields a positive critical point $v_{1}\in H_{0}^{1}(\Omega)^{G}$ of $J$ such
that $J(v_{1})\leq d_{1},$ and Theorem 3.7 in \cite{cp}, conveniently adapted
to the functional we are considering here, yields $m-1$ pairs of sign changing
critical points $\pm v_{2},\ldots,\pm v_{m}\in H_{0}^{1}(\Omega)^{G}$ such
that%
\[
J(v_{k})\leq d_{k}\text{\qquad for every }k=2,\ldots,m.
\]
The proof that $v_{k}$ may be chosen so that $J(u_{k})\leq c_{k}$ for every
\ $k=1,\ldots,m,$ follows just as in \cite{cf}.
\end{proof}

\bigskip

\noindent\textbf{Proof of Theorem \ref{thmmain2}.}\emph{\qquad}This follows
from Theorem \ref{thmcfK} and Proposition \ref{propHopf}. \qed\noindent

\section{Nonexistence}

\label{sec:nonexistence}Fix $k_{1},\ldots,k_{m}\in\mathbb{N}\cup\{0\}$ with
$k:=k_{1}+\cdots+k_{m}\leq N-3$ and a bounded smooth domain $\Theta$ in
$\mathbb{R}^{N-k}$ with $\overline{\Theta}\subset\left(  0,\infty\right)
^{m}\times\mathbb{R}^{N-k-m}$. Set
\begin{equation}
\Omega:=\{(y^{1},\ldots,y^{m},z)\in\mathbb{R}^{k_{1}+1}\times\cdots
\times\mathbb{R}^{k_{m}+1}\times\mathbb{R}^{N-k-m}:\left(  \left\vert
y^{1}\right\vert ,\ldots,\left\vert y^{m}\right\vert ,z\right)  \in\Theta\}.
\label{omega}%
\end{equation}
Let $G:=O(k_{1}+1)\times\cdots\times O(k_{m}+1).$ We think of $G$ as a
subgroup of $O(N)$ acting on $\mathbb{R}^{k_{1}+1}\times\cdots\times
\mathbb{R}^{k_{m}+1}\times\mathbb{R}^{N-k-m}$ in the obvious way, i.e.
\begin{equation}
(g_{1},\ldots,g_{m})(y^{1},\ldots,y^{m},z):=(g_{1}y^{1},\ldots,g_{m}y^{m},z)
\label{Gaction}%
\end{equation}
for $g_{i}\in O(k_{i}+1),$ $y^{i}\in\mathbb{R}^{k_{i}+1},$ $z\in
\mathbb{R}^{N-k-m}.$ Then $\Omega$ is $G$-invariant. For $K\in\mathcal{C}%
^{0}(\overline{\Omega})$ we consider the problem%
\begin{equation}
\left\{
\begin{array}
[c]{ll}%
-\Delta u=K(x)\left\vert u\right\vert ^{p-2}u & \text{in }\Omega,\\
\hspace{0.6cm}u=0 & \text{on }\partial\Omega.
\end{array}
\right.  \label{probK2}%
\end{equation}

\begin{proposition}
\label{propexK}If $K$ is positive and $G$-invariant in $\overline{\Omega}$ and
$0\leq k\leq N-3,$ then problem \emph{(\ref{probK2})} has infinitely many
$G$-invariant solutions for $p\in(2,2_{N,k}^{\ast}).$
\end{proposition}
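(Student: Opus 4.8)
The plan is to reduce problem (\ref{probK2}) on the $G$-invariant domain $\Omega \subset \mathbb{R}^N$ to a weighted subcritical problem on the lower-dimensional domain $\Theta \subset \mathbb{R}^{N-k}$, and then apply a standard equivariant variational argument there. The key observation is that a $G$-invariant function $u$ on $\Omega$ descends to a function $v$ on $\Theta$ via $u(y^1,\dots,y^m,z) = v(|y^1|,\dots,|y^m|,z)$. Under this correspondence, writing $r_i := |y^i|$, the Laplacian transforms as
\[
\Delta u = \sum_{i=1}^m \left( \partial_{r_i}^2 v + \frac{k_i}{r_i}\partial_{r_i} v \right) + \Delta_z v,
\]
so $u$ solves $-\Delta u = K|u|^{p-2}u$ in $\Omega$ if and only if $v$ solves the weighted equation
\[
-\operatorname{div}\!\left( \rho(x)\nabla v \right) = \rho(x)\, K(x)\, |v|^{p-2}v \quad\text{in }\Theta, \qquad v=0 \text{ on }\partial\Theta,
\]
where $\rho(x) := \prod_{i=1}^m t_i^{k_i}$ for $x=(t_1,\dots,t_m,z)\in\Theta$. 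Since $\overline{\Theta}\subset(0,\infty)^m\times\mathbb{R}^{N-k-m}$, the weight $\rho$ is bounded and bounded away from zero on $\overline{\Theta}$, so the weighted Sobolev space $H^1_{0,\rho}(\Theta)$ coincides (with equivalent norm) with $H^1_0(\Theta)$, a domain in $\mathbb{R}^{N-k}$. This is the step where the supercriticality is defeated: because $p < 2^*_{N,k} = \frac{2(N-k)}{N-k-2}$ is \emph{subcritical} for $H^1_0(\Theta)\hookrightarrow L^p(\Theta)$, the embedding into $L^p_\rho(\Theta)$ is compact.

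With the reduction in hand, the remaining work is routine equivariant critical point theory. I would consider the functional
\[
\mathcal{J}(v) := \frac{1}{2}\int_\Theta \rho\,|\nabla v|^2 - \frac{1}{p}\int_\Theta \rho\, K\,|v|^p
\]
on $H^1_0(\Theta)$. Since $p<2^*_{N,k}$, $\mathcal J$ is $\mathcal C^1$, even, satisfies the Palais–Smale condition (by the compact embedding just noted), and has the mountain-pass geometry (here one uses $K>0$ on $\overline\Theta$ and $\rho>0$). The symmetric mountain pass theorem / Clark-type theorem via the genus then yields an unbounded sequence of critical values, hence infinitely many pairs $\pm v_j$ of solutions to the weighted problem. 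By the principle of symmetric criticality (as invoked elsewhere in the paper), each $v_j$ lifts to a $G$-invariant solution $u_j$ of (\ref{probK2}), and distinct $v_j$ give distinct $u_j$, so problem (\ref{probK2}) has infinitely many ($G$-invariant) solutions.

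A couple of points need care. First, when some $k_i=0$ the corresponding factor $\mathbb{R}^{k_i+1}=\mathbb{R}$ carries only the trivial $O(1)=\mathbb{Z}/2$ action and $|y^i|$ is not smooth at $0$; but the constraint $\overline\Theta\subset(0,\infty)^m\times\mathbb{R}^{N-k-m}$ keeps us away from $r_i=0$, so this causes no regularity problem, and the identity $G$-invariance $\Leftrightarrow$ dependence on $(|y^1|,\dots,|y^m|,z)$ still holds (with $\mathbb{Z}/2$-invariance in each of the one-dimensional slots being automatic once we write $v$ as a function of $|y^i|$). Second, one should check that the reduced equation is genuinely equivalent — i.e. that every solution of the weighted problem on $\Theta$ really produces a (weak, hence by elliptic regularity classical) solution on $\Omega$ and conversely; this is a direct computation with the chain rule, and the boundary condition matches because $\partial\Omega$ corresponds to $\partial\Theta$. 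I expect the main obstacle to be purely bookkeeping: setting up the weighted Sobolev framework cleanly and verifying the equivalence of the two problems, after which the infinitely-many-solutions conclusion is completely standard. (Indeed, this proposition is the mechanism by which the ``infinitely many solutions for $p\in(2,2^*_{N,k})$'' clauses in Theorems \ref{thmmain1}, \ref{thmmain4} and \ref{thmmain3} will be obtained.)
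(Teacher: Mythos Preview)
Your proposal is correct and matches the paper's own proof essentially step for step: the paper also reduces to the weighted problem $-\mathrm{div}(a(x)\nabla v)=a(x)K(x)|v|^{p-2}v$ on $\Theta$ with $a(x)=x_1^{k_1}\cdots x_m^{k_m}$, observes that $a$ and $aK$ are bounded above and below so the weighted norms are equivalent to the standard ones, and then invokes the compact embedding $H_0^1(\Theta)\hookrightarrow L^p(\Theta)$ for $p<2_{N,k}^{\ast}$ together with the symmetric mountain pass theorem to obtain an unbounded sequence of critical values. The only cosmetic difference is that the paper passes directly through the equivalence of the two boundary value problems rather than appealing to the principle of symmetric criticality for the lift.
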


\begin{proof}
A $G$-invariant function $u(y^{1},\ldots,y^{m},z)=v(\left\vert y^{1}%
\right\vert ,\ldots,\left\vert y^{m}\right\vert ,z)$ solves problem
(\ref{probK2}) if and only if $v$ solves
\[
-\Delta v-\sum_{i=1}^{m}\frac{k_{i}}{x_{i}}\frac{\partial v}{\partial x_{i}%
}=K(x)|v|^{p-2}v\quad\text{in}\ \Theta,\qquad v=0\quad\text{on}\ \partial
\Theta.
\]
This problem can be rewritten as%
\begin{equation}
-\text{div}(a(x)\nabla v)=Q(x)|v|^{p-2}v\quad\text{in}\ \Theta,\qquad
v=0\quad\text{on}\ \partial\Theta, \label{eqdiv}%
\end{equation}
where $a(x_{1},\ldots,x_{N-k}):=x_{1}^{k_{1}}\cdots x_{m}^{k_{m}}$ and
$Q(x):=a(x)K(x).$ Note that both $a$ and $Q$ are continuous and strictly
positive in $\overline{\Theta}.$ Hence, the norms%
\[
\left\Vert v\right\Vert _{a}:=\left(  \int_{\Theta}a(x)\left\vert \nabla
v\right\vert ^{2}\right)  ^{1/2}\text{\qquad and\qquad}\left\vert v\right\vert
_{Q,p}:=\left(  \int_{\Theta}Q(x)\left\vert v\right\vert ^{p}\right)  ^{1/p}%
\]
are equivalent to those of $H_{0}^{1}(\Theta)$ and $L^{p}(\Theta)$
respectively. Since $H_{0}^{1}(\Theta)$ is compactly embedded in $L^{p}%
(\Theta)$ for $p<2_{N-k}^{\ast},$ the functional%
\[
J(v):=\frac{1}{2}\left\Vert v\right\Vert _{a}^{2}-\frac{1}{p}\left\vert
v\right\vert _{Q,p}^{p},\text{\qquad}v\in H_{0}^{1}(\Theta),
\]
satisfies the Palais-Smale condition. It clearly satisfies all other
hypotheses of the symmetric mountain pass theorem \cite{ar}. Hence, it has an
unbounded sequence of critical values. The critical values of $J$ are the
solutions of (\ref{eqdiv}).
\end{proof}

Next, fix $\tau_{1},\ldots,\tau_{m}\in(0,\infty),$ and let $\varphi_{i}$ be
the solution to the problem%
\[
\left\{
\begin{array}
[c]{ll}%
\varphi_{i}^{\prime}(t)t+(k_{i}+1)\varphi_{i}(t)=1, & t\in(0,\infty),\\
\varphi_{i}(\tau_{i})=0. &
\end{array}
\right.
\]
Explicitly, $\varphi_{i}(t)=\frac{1}{k_{i}+1}\left[  1-(\frac{\tau_{i}}%
{t})^{k_{i}+1}\right]  .$ Note that $\varphi_{i}$ is strictly increasing in
$(0,\infty).$ For $y^{i}\neq0$ we define%
\begin{equation}
\chi(y^{1},\ldots,y^{m},z):=(\varphi_{1}(\left\vert y^{1}\right\vert
)y^{1},\ldots,\varphi_{m}(\left\vert y^{m}\right\vert )y^{m},z). \label{vf}%
\end{equation}

\begin{lemma}
\label{lemvf}$\chi$ has the following properties:

\begin{enumerate}
\item[(a)] \emph{div}$\chi=N-k,$

\item[(b)] $\left\langle \mathrm{d}\chi(y^{1},\ldots,y^{m},z)\left[
\xi\right]  ,\xi\right\rangle \leq\max\left\{  1-k_{1}\varphi_{1}(\left\vert
y^{1}\right\vert ),\ldots,1-k_{m}\varphi_{m}(\left\vert y^{m}\right\vert
),1\right\}  \left\vert \xi\right\vert ^{2}$ \ for every $y^{i}\in
\mathbb{R}^{k_{i}+1}\smallsetminus\{0\},$ $z\in\mathbb{R}^{N-k-m},$ $\xi
\in\mathbb{R}^{N}.$
\end{enumerate}
\end{lemma}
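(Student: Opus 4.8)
The plan is to verify both identities by a direct computation of the differential $\mathrm{d}\chi$ at a point $(y^{1},\ldots,y^{m},z)$ with all $y^{i}\neq 0$. Since the $z$-coordinates are carried over by the identity and the $i$-th block $\varphi_{i}(\lvert y^{i}\rvert)\,y^{i}$ depends only on $y^{i}$, the Jacobian of $\chi$ is block diagonal: an $(N-k-m)\times(N-k-m)$ identity block coming from $z$, and for each $i$ a $(k_{i}+1)\times(k_{i}+1)$ block $A_{i}:=\mathrm{d}_{y^{i}}\bigl(\varphi_{i}(\lvert y^{i}\rvert)y^{i}\bigr)$. Writing $r_{i}:=\lvert y^{i}\rvert$ and using $\partial_{y^{i}} r_{i}=y^{i}/r_{i}$, the product rule gives
\[
A_{i}=\varphi_{i}(r_{i})\,\mathrm{I}_{k_{i}+1}+\frac{\varphi_{i}^{\prime}(r_{i})}{r_{i}}\,y^{i}\otimes y^{i},
\]
where $y^{i}\otimes y^{i}$ denotes the rank-one symmetric matrix with entries $y^{i}_{a}y^{i}_{b}$.

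For part (a): $\mathrm{div}\,\chi=\sum_{i=1}^{m}\operatorname{tr}A_{i}+(N-k-m)$. Now $\operatorname{tr}(y^{i}\otimes y^{i})=\lvert y^{i}\rvert^{2}=r_{i}^{2}$, so $\operatorname{tr}A_{i}=(k_{i}+1)\varphi_{i}(r_{i})+\varphi_{i}^{\prime}(r_{i})r_{i}$, which by the defining ODE $\varphi_{i}^{\prime}(t)t+(k_{i}+1)\varphi_{i}(t)=1$ equals exactly $1$. Summing over $i$ gives $\mathrm{div}\,\chi=m+(N-k-m)=N-k$, as claimed. (The explicit formula $\varphi_{i}(t)=\frac{1}{k_{i}+1}[1-(\tau_{i}/t)^{k_{i}+1}]$ is not even needed here; only the ODE matters.)

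For part (b): given $\xi=(\xi^{1},\ldots,\xi^{m},\zeta)\in\mathbb{R}^{N}$, block-diagonality gives $\langle \mathrm{d}\chi[\xi],\xi\rangle=\sum_{i=1}^{m}\langle A_{i}\xi^{i},\xi^{i}\rangle+\lvert\zeta\rvert^{2}$. From the formula for $A_{i}$,
\[
\langle A_{i}\xi^{i},\xi^{i}\rangle=\varphi_{i}(r_{i})\,\lvert\xi^{i}\rvert^{2}+\frac{\varphi_{i}^{\prime}(r_{i})}{r_{i}}\,\langle y^{i},\xi^{i}\rangle^{2}.
\]
Here I would use the ODE to eliminate $\varphi_{i}^{\prime}$: $\varphi_{i}^{\prime}(r_{i})/r_{i}=(1-(k_{i}+1)\varphi_{i}(r_{i}))/r_{i}^{2}$, and note $\varphi_{i}(r_{i})\ge 0$ on the relevant range (since $\varphi_{i}$ is increasing and vanishes at $\tau_{i}$ — more precisely, the argument only needs $\varphi_{i}(r_{i})>0$, which holds for $r_{i}>\tau_{i}$, but the estimate as stated is a pointwise bound that I will check holds regardless of sign by treating the two cases $\varphi_{i}^{\prime}(r_{i})\ge 0$ and $<0$ separately). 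The key inequality is Cauchy–Schwarz, $\langle y^{i},\xi^{i}\rangle^{2}\le r_{i}^{2}\lvert\xi^{i}\rvert^{2}$: when $\varphi_{i}^{\prime}(r_{i})\ge 0$ we bound the rank-one term above using Cauchy–Schwarz to get $\langle A_{i}\xi^{i},\xi^{i}\rangle\le\bigl(\varphi_{i}(r_{i})+\varphi_{i}^{\prime}(r_{i})r_{i}\bigr)\lvert\xi^{i}\rvert^{2}=\bigl(1-k_{i}\varphi_{i}(r_{i})\bigr)\lvert\xi^{i}\rvert^{2}$, again by the ODE; when $\varphi_{i}^{\prime}(r_{i})<0$ we simply drop the nonpositive rank-one term and use $\langle A_{i}\xi^{i},\xi^{i}\rangle\le\varphi_{i}(r_{i})\lvert\xi^{i}\rvert^{2}\le\bigl(1-k_{i}\varphi_{i}(r_{i})\bigr)\lvert\xi^{i}\rvert^{2}$ (the last step since $\varphi_{i}^{\prime}(r_{i})r_{i}\le 0$). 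In either case $\langle A_{i}\xi^{i},\xi^{i}\rangle\le\bigl(1-k_{i}\varphi_{i}(r_{i})\bigr)\lvert\xi^{i}\rvert^{2}$. Bounding each coefficient $1-k_{i}\varphi_{i}(r_{i})$ and the $\zeta$-coefficient $1$ by their maximum $M^{\ast}:=\max\{1-k_{1}\varphi_{1}(r_{1}),\ldots,1-k_{m}\varphi_{m}(r_{m}),1\}$ and using $\sum_{i}\lvert\xi^{i}\rvert^{2}+\lvert\zeta\rvert^{2}=\lvert\xi\rvert^{2}$ yields the stated bound.

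The computation is entirely routine; the only point requiring a moment's care is the sign bookkeeping for the rank-one term $\frac{\varphi_{i}^{\prime}(r_{i})}{r_{i}}y^{i}\otimes y^{i}$ in part (b), i.e. applying Cauchy–Schwarz in the correct direction depending on the sign of $\varphi_{i}^{\prime}(r_{i})$, together with the repeated use of the defining ODE to convert expressions involving $\varphi_{i}^{\prime}$ back into $1-k_{i}\varphi_{i}$. I do not anticipate any genuine obstacle.
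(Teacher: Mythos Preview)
Your proof is correct. Part (a) is essentially identical to the paper's. For part (b) the approaches differ: the paper exploits the $G$-equivariance of $\chi$ (with $G=O(k_{1}+1)\times\cdots\times O(k_{m}+1)$) to reduce to points of the form $y^{i}=(r_{i},0,\ldots,0)$, at which each block $A_{i}=\mathrm{d}\chi_{i}(y^{i})$ is already diagonal with entries $1-k_{i}\varphi_{i}(r_{i})$ and $\varphi_{i}(r_{i})$ (the latter repeated $k_{i}$ times); since $\varphi_{i}(t)<\frac{1}{k_{i}+1}$ for all $t>0$, the larger eigenvalue is $1-k_{i}\varphi_{i}(r_{i})$, and the bound follows. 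You instead keep the general formula $A_{i}=\varphi_{i}(r_{i})\,\mathrm{I}+\frac{\varphi_{i}'(r_{i})}{r_{i}}\,y^{i}\otimes y^{i}$ and bound the rank-one contribution via Cauchy--Schwarz. Both routes are short; the paper's symmetry reduction exhibits the eigenvalues explicitly, while yours avoids invoking equivariance at the cost of a slightly less transparent estimate.

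One small correction: your case $\varphi_{i}'(r_{i})<0$ is vacuous, since $\varphi_{i}$ is strictly increasing on $(0,\infty)$ (from the explicit formula $\varphi_{i}'(t)=\tau_{i}^{k_{i}+1}t^{-(k_{i}+2)}>0$, or equivalently from the ODE together with $\varphi_{i}<\frac{1}{k_{i}+1}$). This is fortunate, because your reasoning in that case has a sign error: the ODE gives $\varphi_{i}=(1-k_{i}\varphi_{i})-\varphi_{i}'r_{i}$, so $\varphi_{i}'r_{i}\le 0$ yields $\varphi_{i}\ge 1-k_{i}\varphi_{i}$, not $\le$. Simply delete the second case and note that $\varphi_{i}'>0$ throughout; then only your first (correct) estimate is needed.
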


\begin{proof}
(a) Write $y^{i}=(y_{1}^{i},\ldots,y_{k_{i}+1}^{i}).$ Then,
\[
\text{div}\chi(y^{1},\ldots,y^{m},z)=%
%TCIMACRO{\tsum \limits_{i=1}^{m}}%
%BeginExpansion
{\textstyle\sum\limits_{i=1}^{m}}
%EndExpansion
\left[
%TCIMACRO{\tsum \limits_{j=1}^{k_{i}+1}}%
%BeginExpansion
{\textstyle\sum\limits_{j=1}^{k_{i}+1}}
%EndExpansion
\varphi_{i}^{\prime}(\left\vert y^{i}\right\vert )\frac{(y_{j}^{i})^{2}%
}{\left\vert y^{i}\right\vert }+(k_{i}+1)\varphi_{i}(\left\vert y^{i}%
\right\vert )\right]  +N-k-m=N-k.
\]
(b) $\chi$ is $G$-equivariant for the $G$-action defined in (\ref{Gaction}),
that is,
\[
\chi(gy,z)=g\chi(y,z)
\]
for every $g\in G,$ $y=(y^{1},\ldots,y^{m}),$ $y^{i}\in\mathbb{R}^{k_{i}%
+1}\smallsetminus\{0\},$ $z\in\mathbb{R}^{N-k-m}.$ Therefore, $g\circ
\mathrm{d}\chi(y,z)=\mathrm{d}\chi(gy,z)\circ g$ and, hence,%
\[
\left\langle \mathrm{d}\chi\left(  y,z\right)  \left[  \xi\right]
,\xi\right\rangle =\left\langle g\left(  \mathrm{d}\chi\left(  y,z\right)
\left[  \xi\right]  \right)  ,g\xi\right\rangle =\left\langle \mathrm{d}%
\chi\left(  gy,z\right)  \left[  g\xi\right]  ,g\xi\right\rangle
\]
for all $\xi\in\mathbb{R}^{N}.$ Thus, it suffices to show that the inequality
(b) holds for $y^{i}=(y_{1}^{i},0,\ldots,0)$ with $y_{1}^{i}>0.$ Set $\chi
_{i}(y^{i}):=\varphi_{i}(\left\vert y^{i}\right\vert )y^{i}.$ A
straightforward computation shows that, for such $y^{i},$ $\mathrm{d}\chi
_{i}(y^{i})$ is a diagonal matrix whose diagonal entries are $a_{11}%
=1-k_{i}\varphi_{i}(y_{1}^{i})$ and $a_{jj}=\varphi_{i}(y_{1}^{i})$ for
$j=2,\ldots,k_{i}+1.$ Since $\varphi_{i}(t)<\frac{1}{k_{i}+1}$ for all
$t\in(0,\infty),$ (b) follows.\bigskip
\end{proof}

\noindent\textbf{Proof of Theorem \ref{thmmain1}.}\emph{\qquad}The variational
identity (4) in Pucci and Serrin's paper \cite{ps} implies that, if
$u\in\mathcal{C}^{2}(\Omega)\cap\mathcal{C}^{1}(\overline{\Omega})$ is a
solution of (\ref{probK}) and $\chi\in\mathcal{C}^{1}(\overline{\Omega
},\mathbb{R}^{N}),$ then%
\begin{align}
\frac{1}{2}\int_{\partial\Omega}\left\vert \nabla u\right\vert ^{2}%
\left\langle \chi,\nu_{\Omega}\right\rangle d\sigma &  =\int_{\Omega}\left(
\text{div}\chi\right)  \left[  \frac{1}{p}K\left\vert u\right\vert ^{p}%
-\frac{1}{2}\left\vert \nabla u\right\vert ^{2}\right]  dx\nonumber\\
&  +\frac{1}{p}\int_{\Omega}\left\vert u\right\vert ^{p}\left\langle
\chi,\nabla K\right\rangle dx+\int_{\Omega}\left\langle \mathrm{d}\chi\left[
\nabla u\right]  ,\nabla u\right\rangle dx \label{idPS}%
\end{align}
where $\nu_{\Omega}$ is the outward pointing unit normal to $\partial\Omega$.
Take $\chi$ to be the vector field defined in (\ref{vf}) for $m=1,$ $0\leq
k\leq N-3$ and $\tau_{1}=t_{0}$ as in Definition \ref{**}, that is,%
\[
\chi(y,z):=(\varphi(\left\vert y\right\vert )y,z),\text{\qquad}(y,z)\in\left(
\mathbb{R}^{k+1}\smallsetminus\{0\}\right)  \times\mathbb{R}^{N-k-1}%
\]
with $\varphi(t)=\frac{1}{k+1}\left[  1-(\frac{t_{0}}{t})^{k+1}\right]  .$
Then, by Lemma \ref{lemvf},
\begin{equation}
\text{div}\chi=N-k. \label{fact2}%
\end{equation}
Note that, since $\varphi(t)\geq0$ for $t\in(t_{0},\infty)$ and $\left\vert
y\right\vert >t_{0}$ if $(y,z)\in\Omega,$ we have that
\begin{equation}
\left\langle \chi(y,z),\nabla K(y,z)\right\rangle =\varphi(\left\vert
y\right\vert )\left\langle y,\partial_{y}K(y,z)\right\rangle +\left\langle
z,\partial_{z}K(y,z)\right\rangle \leq0\text{\quad}\forall(y,z)\in\Omega.
\label{fact1}%
\end{equation}
Moreover, since $1-k\varphi(t)<1$ for $t\in(t_{0},\infty)$, Lemma
\ref{lemvf}\ yields
\begin{equation}
\left\langle \mathrm{d}\chi\left(  x\right)  \left[  \xi\right]
,\xi\right\rangle \leq\left\vert \xi\right\vert ^{2}\qquad\forall x\in
\Omega,\text{ }\xi\in\mathbb{R}^{N}. \label{fact3}%
\end{equation}
We claim that \
\begin{equation}
\left\langle \chi(x),\nu_{\Omega}(x)\right\rangle >0\qquad\forall x\in
\partial\Omega\smallsetminus\left\{  g\xi_{0},g\xi_{1}:g\in O(k+1)\right\}  .
\label{fact4}%
\end{equation}
Since $\Omega$ is $O(k+1)$-invariant, $\nu_{\Omega}$ is $O(k+1)$-equivariant.
Thus, it suffices to show that%
\begin{equation}
\left\langle (\varphi(t)t,z),\nu_{\Theta}(t,z)\right\rangle >0\qquad\text{for
all }(t,z)\in\partial\Theta\smallsetminus\left\{  \xi_{0},\xi_{1}\right\}  ,
\label{fact4a}%
\end{equation}
where $\nu_{\Theta}(t,z)$ is the outward pointing unit normal to
$\partial\Theta$ at $(t,z)$ which we write as $\nu_{\Theta}(t,z)=\left(
\nu_{1}(t,z),\nu_{2}(t,z)\right)  \in\mathbb{R}\times\mathbb{R}^{N-k-1}.$ Let
$(t,z)\in\partial\Theta.$ Since $\Theta$ is doubly starshaped we have that%
\[
(t-t_{i})\nu_{1}(t,z)+\left\langle z,\nu_{2}(t,z)\right\rangle >0\text{\qquad
if }(t,z)\neq(t_{i},0),\text{ for }i=0,1,
\]
with $t_{0},t_{1}$ as in Definition \ref{**}. Therefore,%
\[
\left\langle (\varphi(t)t,z),\nu_{\Theta}(t,z)\right\rangle =\varphi
(t)t\nu_{1}(t,z)+\left\langle z,\nu_{2}(t,z)\right\rangle >(\varphi
(t)t-t+t_{i})\nu_{1}(t,z).
\]
Set $\psi(t):=\varphi(t)t-t.$ Note that $\psi^{\prime}(t)=-k\varphi(t)<0$ if
$t>t_{0}.$ So, since $t\in(t_{0},t_{1})$ for every $(t,z)\in$ $\Theta,$ we
have that%
\[
\varphi(t_{1})t_{1}-t_{1}=\psi(t_{1})\leq\psi(t)\leq\psi(t_{0})=-t_{0}%
\text{\qquad}\forall(t,z)\in\partial\Theta.
\]
If $\nu_{1}(t,z)\leq0$, then%
\[
\left\langle (\varphi(t)t,z),\nu_{\Theta}(t,z)\right\rangle >(\psi
(t)+t_{0})\nu_{1}(t,z)\geq0
\]
and if $\nu_{1}(t,z)\geq0$, then
\[
\left\langle (\varphi(t)t,z),\nu_{\Theta}(t,z)\right\rangle >(\psi
(t)+t_{1})\nu_{1}(t,z)\geq\varphi(t_{1})t_{1}\nu_{1}(t,z)\geq0.
\]
This proves (\ref{fact4a}).

\noindent Combining properties (\ref{fact2}), (\ref{fact1}), (\ref{fact3}) and
(\ref{fact4}) with identity (\ref{idPS}) gives%
\begin{align*}
0  &  <\int_{\Omega}\left(  \text{div}\chi\right)  \left[  \frac{1}%
{p}K\left\vert u\right\vert ^{p}-\frac{1}{2}\left\vert \nabla u\right\vert
^{2}\right]  dx+\int_{\Omega}\left\vert \nabla u\right\vert ^{2}dx\\
&  =(N-k)\left(  \frac{1}{p}-\frac{1}{2}+\frac{1}{N-k}\right)  \int_{\Omega
}\left\vert \nabla u\right\vert ^{2}dx
\end{align*}
which implies that $p<2_{N,k}^{\ast}$ if $u\neq0.$

\noindent Proposition \ref{propexK} yields infinitely many solutions for
$p<2_{N,k}^{\ast}$. \qed\noindent

\bigskip

\noindent\textbf{Proof of Theorem \ref{thmmain4}.}\qquad Choose $\alpha
\in(1,\frac{N-k}{2})$ with $2_{N,k}^{\ast}+\varepsilon\geq\frac{2(N-k)}%
{N-k-2\alpha}.$ Fix $\tau_{1},\ldots,\tau_{m}\in(0,\infty)$ and, for the given
$k_{1},\ldots,k_{m},$ define $\chi$ as in (\ref{vf})$.$ Let $0<\varrho
<\tau_{i}$ be defined by%
\[
\max\left\{  1-k_{1}\varphi_{1}(\tau_{1}-\varrho),\ldots,1-k_{m}\varphi
_{m}(\tau_{m}-\varrho)\right\}  =\alpha,
\]
$\Theta:=B_{\varrho}^{N-k}(\tau)$ be the ball of radius $\varrho$ centered at
$\tau=(\tau_{1},\ldots,\tau_{m},0)$ in $\mathbb{R}^{m}\times\mathbb{R}%
^{N-k-m}$ and $\Omega$ be defined as in (\ref{omega}). Then $\Omega$ has the
homotopy type of $\mathbb{S}^{k_{1}}\times\cdots\times\mathbb{S}^{k_{m}}.$
Moreover, Lemma \ref{lemvf}\ asserts that
\begin{equation}
\text{div}\chi=N-k\text{\qquad and\qquad}\left\langle \mathrm{d}\chi\left(
x\right)  \left[  \xi\right]  ,\xi\right\rangle \leq\alpha\left\vert
\xi\right\vert ^{2}\quad\forall x\in\Omega,\text{ }\xi\in\mathbb{R}^{N}.
\label{fact5}%
\end{equation}
Since $\varphi_{i}(t)<0$ if $t<\tau_{i}$ and $\varphi_{i}(t)>0$ if $t>\tau
_{i}$ we have that, for all but a finite number of points $(x,z)\in
\partial\Theta,$
\[
\left\langle (\varphi_{1}(x_{1})x_{1},\ldots,\varphi_{m}(x_{m})x_{m}%
,z),\,\nu_{\Theta}(t,z)\right\rangle =%
%TCIMACRO{\tsum \limits_{i=1}^{m}}%
%BeginExpansion
{\textstyle\sum\limits_{i=1}^{m}}
%EndExpansion
\varphi_{i}(x_{i})x_{i}(x_{i}-\tau_{i})+\left\vert z\right\vert ^{2}>0.
\]
Hence,
\begin{equation}
\left\langle \chi,\nu_{\Omega}\right\rangle >0\qquad\text{a.e. on }%
\partial\Omega. \label{fact6}%
\end{equation}
Combining properties (\ref{fact5}) and (\ref{fact6}) with identity
(\ref{idPS}) for $K=1$ we obtain%
\begin{align*}
0  &  <\int_{\Omega}\left(  \text{div}\chi\right)  \left[  \frac{1}%
{p}\left\vert u\right\vert ^{p}-\frac{1}{2}\left\vert \nabla u\right\vert
^{2}\right]  dx+\alpha\int_{\Omega}\left\vert \nabla u\right\vert ^{2}dx\\
&  =(N-k)\left(  \frac{1}{p}-\frac{1}{2}+\frac{\alpha}{N-k}\right)
\int_{\Omega}\left\vert \nabla u\right\vert ^{2}dx
\end{align*}
which implies that $p<\frac{2(N-k)}{N-k-2\alpha}\leq2_{N,k}^{\ast}%
+\varepsilon$ if $u\neq0.$ Consequently, problem (\ref{prob}) does not have a
nontrivial solution in $\Omega$ for $p\geq2_{N,k}^{\ast}+\varepsilon$, whereas
Proposition \ref{propexK} yields infinitely many solutions for $p<2_{N,k}%
^{\ast}$. \qed\noindent

\bigskip

\noindent\textbf{Proof of Theorem \ref{thmmain3}.}\emph{\qquad}For $0\leq
n\leq\dim\mathbb{K}-2,$ let $\Theta_{n}$ be a bounded smooth domain in
$\mathbb{R}^{\dim\mathbb{K}-n+1}$ with $\overline{\Theta_{n}}\subset\left(
0,\infty\right)  \times\mathbb{R}^{\dim\mathbb{K}-n},$ which is doubly
starshaped with respect to $\mathbb{R}\times\{0\}$. Define $U_{0}:=\Theta_{0}$
and
\[
U_{n}:=\{(y,z)\in\mathbb{R}^{n+1}\times\mathbb{R}^{\dim\mathbb{K}-n}:\left(
\left\vert y\right\vert ,z\right)  \in\Theta_{n}\}\subset\mathbb{R}%
^{\dim\mathbb{K}+1}%
\]
if $n\geq1.$ Theorem \ref{thmmain1} asserts that problem
\[
-\Delta v=\frac{1}{2\left\vert x\right\vert }\left\vert v\right\vert
^{p-2}v\quad\text{in }U_{n},\qquad v=0\quad\text{on }\partial U_{n},
\]
has infinitely many solutions for $p<2_{\dim\mathbb{K}+1,n}^{\ast}$ and no
nontrivial solutions for $p\geq2_{\dim\mathbb{K}+1,n}^{\ast}.$ Hence,
Proposition \ref{propHopf} implies that problem%
\[
-\Delta u=\left\vert u\right\vert ^{p-2}u\quad\text{in }\Omega_{n}:=\pi
^{-1}(U_{n}),\qquad u=0\quad\text{on }\partial\Omega_{n},
\]
has infinitely many $\mathbb{S}_{\mathbb{K}}$-invariant solutions if
$p<2_{N,k}^{\ast}$ and does not have a nontrivial $\mathbb{S}_{\mathbb{K}}%
$-invariant solution if $p\geq2_{N,k}^{\ast},$ where $k:=\dim\mathbb{K}-1+n.$

\noindent Finally, since the restriction of the Hopf map $\pi:\mathbb{R}%
^{N}\smallsetminus\{0\}\rightarrow\mathbb{R}^{\dim\mathbb{K}+1}\smallsetminus
\{0\}$ is a fibration and $U_{n}$ is contractible in $\mathbb{R}%
^{\dim\mathbb{K}+1}\smallsetminus\{0\},$ the domain $\Omega_{n}$ is fiber
homotopy equivalent to $\mathbb{S}_{\mathbb{K}}\times U_{n}$ \cite[Chap.2,
Sec.8, Theorem 14]{s}. Hence, it has the homotopy type of $\mathbb{S}%
_{\mathbb{K}}\times\mathbb{S}^{n}$ if $n\geq1$ and of $\mathbb{S}_{\mathbb{K}%
}$ if $n=0.$ \qed\noindent

\end{document}